\documentclass[11pt,reqno]{amsart}
\usepackage[T1]{fontenc}
\usepackage{lmodern}
\usepackage{amsmath,amssymb,mathtools}
\usepackage{microtype}
\usepackage[a4paper,margin=30mm]{geometry}
\usepackage[hidelinks]{hyperref}
\newtheorem{theorem}{Theorem}[section]
\newtheorem{proposition}[theorem]{Proposition}
\newtheorem{lemma}[theorem]{Lemma}
\newtheorem{corollary}[theorem]{Corollary}
\theoremstyle{remark}
\newtheorem{remark}[theorem]{Remark}
\newcommand{\N}{\mathbb N}
\newcommand{\Nzero}{\mathbb N_0}
\newcommand{\Z}{\mathbb Z}
\newcommand{\R}{\mathbb R}
\newcommand{\C}{\mathbb C}
\newcommand{\T}{\mathbb T}
\newcommand{\E}{\mathbb E}
\newcommand{\cP}{\mathcal P}
\newcommand{\cB}{\mathcal B}
\newcommand{\ee}[1]{\mathrm e(#1)}
\DeclareMathOperator{\supp}{supp}
\title[Non-recurrence along multiplicative semigroups]
{Weak mixing and non-recurrence along finitely generated multiplicative semigroups of $\mathbb N$}

\author{Zhuowei Liu}
\date{\today}
\address[Z. Liu]{School of Mathematics (Zhuhai), Sun Yat-sen University,
	Zhuhai, Guangdong, 519000, P.R. China}
\email{liuzhw55@mail2.sysu.edu.cn}
\subjclass[2020]{Primary 37A25; Secondary 37A44, 60G15}
\keywords{Weak mixing, non-recurrence, multiplicative semigroups, Gaussian shifts}
\begin{document}
\begin{abstract}
Let $\mathcal P$ be a finite nonempty set of primes, and let $S_{\mathcal P}$
be the multiplicative semigroup that they generate. We prove that
there are an invertible weakly mixing measure-preserving
transformation $T$ and a set $A$ of positive measure such that
$A\cap T^{-r}A=\varnothing$ for every $r\in S_{\mathcal P}$.
This gives a negative answer to Bergelson's 1996 question concerning
recurrence along $\{2^m3^n:m,n\geq1\}$, and applies to every
finitely generated multiplicative semigroup of positive integers.
More generally, we give a sufficient condition for weakly mixing
non-recurrence based on divisibility properties and separation
by finitely many rotations of the circle.
This criterion also applies to an infinitely generated
multiplicative semigroup not contained in any $S_{\mathcal P}$
with $\mathcal P$ finite.
\end{abstract}
\maketitle

\section{Introduction}
Poincar\'e's recurrence theorem guarantees that a set of positive
measure in a measure-preserving system returns to itself.
One may ask whether the return time can be required to belong to a
prescribed set of integers. Write $\N=\{1,2,\ldots\}$ and
$\Nzero=\{0,1,2,\ldots\}$. A set $R\subset\N$ is a set of measurable
recurrence if, for every invertible
measure-preserving system $(X,\cB,\mu,T)$ and every $A\in\cB$
with $\mu(A)>0$, there is an $r\in R$ such that
$\mu(A\cap T^{-r}A)>0$.

Consider the multiplicative semigroup
$R_{2,3}=\{2^m3^n:m,n\geq1\}$.
Since none of its elements is divisible by $5$, rotation by $1$ on
$\Z/5\Z$ shows that $R_{2,3}$ is not a set of measurable recurrence.
However, weakly mixing systems have no nontrivial measurable
rotation factors, so this example does not answer the question
for weakly mixing systems. In 1996, Bergelson~\cite[Question~3]{Bergelson} asked whether
every invertible weakly mixing system and every set $A$ of positive
measure satisfy
\[
  \mu(A\cap T^{-2^m3^n}A)>0
  \qquad\text{for some }m,n\geq1.
\]

Frantzikinakis, Host, and Kra recorded this question as open
in their 2025 paper~\cite[Section~3.2]{FHK}.\footnote{%
	After the first version of this paper was submitted to arXiv,
	Bryna Kra kindly shared with the author earlier unpublished
	notes giving a shorter proof of weakly mixing non-recurrence
	for $\{2^n3^m:n,m\ge1\}$, based on
	Grivaux's construction~\cite{Grivaux}.
	She also pointed out that the argument extends to
	arbitrary finitely generated multiplicative semigroups.
	The proof in the present paper was developed before
	the author became aware of these notes.}
By modifying a construction of Maucourant~\cite[Lemma~4.2]{Maucourant},
they showed that $R_{2,3}$ can fail to be recurrent even for a totally
ergodic torus rotation. They also proved that the larger set
$\{k!2^m3^n:k,m,n\geq1\}$ is Bohr recurrent, that is, recurrent
for every finite-dimensional torus rotation \cite[Theorem~1]{FHK}.
We give a negative answer to Bergelson's question.
The result holds for multiplicative semigroups generated
by any finite nonempty set of primes.

\begin{theorem}\label{thm:main}
Let $\cP=\{p_1,\ldots,p_d\}$ be a finite nonempty set of primes and put
\[
  S_{\cP}
  =\left\{p_1^{n_1}\cdots p_d^{n_d}:
                 n_1,\ldots,n_d\in\Nzero\right\}.
\]
There exist a standard
nonatomic probability space $(X,\cB,\mu)$, an invertible weakly mixing
probability-preserving transformation $T:X\to X$, and a Borel set
$A\subset X$ such that
\[
  \mu(A)>0
  \quad\text{and}\quad
  A\cap T^{-r}A=\varnothing
  \qquad\text{for every }r\in S_{\cP}.
\]
The system can be chosen to be a finite product of two-sided 
complex Gaussian shifts.
\end{theorem}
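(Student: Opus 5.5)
The plan is to encode a "rotation obstruction" into the \emph{phases} of finitely many complex Gaussian processes, arranged so that no nontrivial rotation factor appears. A first simplification: it suffices to find $A$ with $\mu(A)>0$ and $\mu(A\cap T^{-r}A)=0$ for all $r\in S_{\cP}$. Indeed, $S_{\cP}$ is countable, so $A'=A\setminus\bigcup_{r}\bigl(T^{-r}A\cup T^{r}A\bigr)\cap A$ still has positive measure. Since each $r\in S_\cP$ is positive, $A'\cap T^{-r}A'=\varnothing$. Recall also that a complex Gaussian shift with spectral measure $\sigma$ is weakly mixing iff $\sigma$ is continuous, and a finite product of weakly mixing systems is weakly mixing. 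So the goal is to find finitely many continuous measures $\sigma_1,\dots,\sigma_k$ on $\T$ and a positive-measure set $A$ in the product of the associated Gaussian shifts that is a.e.\ non-recurrent along $S_\cP$.

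\textbf{Step 1: a finite separating family of rotations.} Fix a prime $q\notin\cP$. Every $r\in S_\cP$ is a unit mod $q^m$ for every $m$, so $r\,a/q \not\equiv 0 \pmod 1$ for $a\not\equiv0\pmod q$. Hence finitely many rotations $\alpha_1,\dots,\alpha_k\in\frac1q\Z/\Z$ have the property that, for each $r\in S_\cP$, some $j$ has $r\alpha_j$ in a fixed closed arc $I$ bounded away from $0$. This is exactly what the $\Z/5\Z$ example uses.

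\textbf{Step 2: make the rotations continuous without destroying separation.} This is the divisibility part. Replace each atom $\delta_{\alpha_j}$ by a continuous measure $\sigma_j$ supported on a Cantor set $K_j$ of $q$-adic points
\[
t=\alpha_j+\sum_{i\ge1}\varepsilon_i q^{-m_i},\qquad \varepsilon_i\in\{0,1\},
\]
with $m_i$ increasing very fast. The aim is the following property: for every $r\in S_\cP$ and every $t\in K_j$, the point $rt$ either stays within $\eta$ of $r\alpha_j$, or, at the scale where $r$ enlarges the digits, its dominant term is again a nonzero $q$-adic fraction $r\varepsilon_i q^{-m_i}$ reduced mod $1$. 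Since $r$ is a unit mod $q$, that fraction again lands in a finite set bounded away from $0$. Thus, for each $r$, the points $rt$ with $t\in K_j$ are controlled uniformly, in the sense that the pushforward of $\sigma_j$ under $t\mapsto rt$ lies (for a suitable $j$, after adjusting the family in Step 1 to include the $q$-adic digit rotations) in a union of arcs avoiding a neighbourhood of $0$. Continuity of $\sigma_j$ comes from the infinitely many free digits.

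\textbf{Step 3: build $A$ from the Gaussian phases.} Let $Z^{(j)}_n$ be the complex Gaussian process with spectral measure $\sigma_j$, so that $\E\bigl[Z^{(j)}_r\overline{Z^{(j)}_0}\bigr]=\hat\sigma_j(r)=\int e(rt)\,d\sigma_j(t)$. I would define $A$ as the event that every $Z^{(j)}_0$ has phase in a small sector around $1$ and modulus in a fixed window. To obtain $\mu(A\cap T^{-r}A)=0$ rather than mere smallness, one cannot simply use correlations, since a nondegenerate Gaussian pair charges every open set. Instead I would realise $Z^{(j)}_n=\int e(nt)\,dW_j(t)$ as a stochastic integral over $K_j$. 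Conditioning on the $\sigma$-algebra of the digit structure of $K_j$ should then express $Z^{(j)}_r$ as a rotation of $Z^{(j)}_0$ by $e(r\alpha)$, with $r\alpha\in I$, plus a term forced to be $0$ on a suitable positive-probability sub-event. I expect this to require replacing the plain phase event by a more rigid event on finitely many Gaussian coordinates; for instance, one could split $W_j$ along the first digit so that finitely many independent Gaussian blocks are rotated by fixed angles under $r$.

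\textbf{Main obstacle.} The hardest step is reconciling Steps 2 and 3: achieving exact (a.e.) disjointness for all infinitely many $r$ at once while keeping every $\sigma_j$ continuous. Large $r$ magnify the perturbation digits, so the separation must come from the $q$-adic unit structure at every scale, not from smallness. Here I would first try to choose the $m_i$ so that, for each $r$, exactly one digit scale dominates, and to verify the rotation claim scale by scale.
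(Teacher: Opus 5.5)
Your preliminary reductions are fine, but Step~3 asks for something that cannot happen. A continuous $\sigma_j$ has infinite support. So for every finite $F\subset\Z$ the form $\E|\sum_{n\in F}c_nZ^{(j)}_n|^2=\int_\T|\sum_{n\in F}c_n\ee{nt}|^2\,d\sigma_j(t)$ is positive definite, and $(Z^{(j)}_n)_{n\in F}$ has an everywhere positive density. Consequently every positive-measure event determined by finitely many coordinates, in one factor or in a finite product, satisfies $\mu(A\cap T^{-r}A)>0$ for all $r>\max F-\min F$. Indeed, the coordinates indexed by $F\cup(F+r)$ are jointly nondegenerate, and $A\cap T^{-r}A$ is a product set of positive Lebesgue measure in them.

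Exact rotations are ruled out for the same reason. $Z^{(j)}_r-cZ^{(j)}_0$ is a nondegenerate Gaussian of variance $\int|\ee{rt}-c|^2\,d\sigma_j>0$, so it vanishes only on a null set. A block of $W_j$ is multiplied by a fixed constant under $n\mapsto n+r$ only if $\ee{rt}$ is constant on a set of positive $\sigma_j$-mass, which forces an atom. So no rigid finite-coordinate event or conditioning trick gives exact disjointness along infinitely many $r$.

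The missing idea is to accept approximate relations with \emph{summable} errors and then delete the returns. Suppose $|\zeta^r-1|\ge\delta$ on $Q$ and $\sum_{r\in Q}\E|Z_r-\zeta^rZ_0|^2<\delta^2\mu(C)/8$, where $C=\{|Z_0-1|<\delta/8\}$. Then Markov's inequality gives $\sum_{r\in Q}\mu(C\cap T^{-r}C)<\mu(C)/2$, and $A=C\setminus\bigcup_{r\in Q}T^{-r}C$ works (Proposition~\ref{prop:summable}). By the previous paragraph, such an $A$ necessarily depends on infinitely many coordinates.

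This changes what Step~2 must deliver. The map $t\mapsto rt$ has to push the spectral measure close to the single point $r/q$, with $\sum_{r\in Q}\int|\ee{rt}-\zeta^r|^2\,d\sigma$ small. Landing in a union of arcs avoiding $0$ is not enough: it allows $|\widehat\sigma(r)|\le1-\kappa$ for infinitely many $r$, and then $\mu(C\cap T^{-r}C)$ is bounded below in terms of $\kappa$ and $\delta$.

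Your $q$-adic digits cannot give either property. Because $q\nmid r$, multiplication by $r$ never clears the denominators $q^{m_i}$. So $rq^{-m_i}\bmod 1$ is merely some nonzero element of $q^{-m_i}\Z/\Z$. Being a unit mod $q$ says its denominator is exactly $q^{m_i}$, not where it sits on $\T$. Hence the claimed finite set bounded away from $0$ does not exist, and once $r>q^{m_i}$ the digits produce uncontrolled phases.

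The denominators must instead \emph{divide} the large times, so they must be built from the primes of $\cP$. Since the tail gcd of $S_\cP$ itself is $1$ when $d\ge2$, the paper proceeds as follows.
\begin{enumerate}
\item Split $S_\cP$ according to a prime of maximal exponent, so that on each part $E$ the tail gcd tends to infinity.
\item Choose $D_1,N_1,D_2,N_2,\ldots$ in this order, with $D_j\mid r$ for all $r\in E$, $r\ge N_j$.
\item Split $E$ by the parity of the block $[N_j,N_{j+1})$, and let $\nu_\epsilon$ be the law of $\sum_{\ell\equiv\epsilon}\omega_\ell/D_\ell$.
\end{enumerate}
For $r$ in a block of parity $\epsilon$, the terms with $\ell\le j$ vanish mod $1$ because $D_\ell\mid r$. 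The remaining terms give $|\ee{rt}-1|\le4\pi N_{j+1}/D_{j+2}$ on the whole support. This bound is square-summable over the blocks because $N_{j+1}$ is fixed before $D_{j+2}$ is chosen (Lemma~\ref{lem:measures}). The separation is then supplied by translating $\nu_\epsilon$ by $1/q$, not by putting $q$-adic digits into the support.
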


Every finitely generated multiplicative semigroup of positive integers
is contained in some $S_{\cP}$, since only finitely many primes divide
its generators. Thus the theorem applies to all such semigroups,
including the time $1$. Taking $\cP=\{2,3\}$ answers Bergelson's question.
Coding the set $A$ also gives a compact topologically weakly mixing
subshift with a nonempty clopen set having no return at any time in
$S_{\cP}$; see Proposition~\ref{prop:symbolic}.

Our argument also applies to sets involving
infinitely many distinct primes.
Write $\T=\R/\Z$ and $\ee{t}=\exp(2\pi i t)$. A set $R\subset\N$ is Bohr recurrent if, for every $m\ge1$,
$\alpha_1,\ldots,\alpha_m\in\T$, and $\varepsilon>0$,
there exists $r\in R$ such that
$\max_{1\le a\le m}|\ee{r\alpha_a}-1|<\varepsilon.$ For a nonempty set $H\subset\N$, $\gcd H$ denotes the
greatest positive integer dividing every member of $H$.
We first prove the following general theorem.

\begin{theorem}\label{thm:general}
Let $R\subset\N$ be of the form
\[
R=F\cup E_1\cup\cdots\cup E_d,
\]
where $d\ge1$, $F$ is finite, and
$E_1,\ldots,E_d$ are infinite. Suppose that
\begin{equation}\label{eq:tail-gcd}
  \lim_{N\to\infty}\gcd\{r\in E_i:r\geq N\}=\infty
  \qquad(1\leq i\leq d).
\end{equation}
Suppose also that there are $\alpha_1,\ldots,\alpha_m\in\T$, with
$m\geq1$, and $\delta>0$ such that
\begin{equation}\label{eq:rotation-separation}
  \max_{1\leq a\leq m}|\ee{r\alpha_a}-1|\geq\delta
  \qquad(r\in R).
\end{equation}
Then there exist a standard nonatomic probability space
$(X,\cB,\mu)$, an invertible weakly mixing probability-preserving
transformation $T:X\to X$, and a Borel set $A\subset X$ such that
\[
  \mu(A)>0,
  \qquad A\cap T^{-r}A=\varnothing\quad(r\in R).
\]
The system can be chosen to be a finite product of two-sided 
complex Gaussian shifts.
\end{theorem}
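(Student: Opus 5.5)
The plan is to work spectrally. I will build a continuous (nonatomic) finite Borel measure $\sigma$ on $\T$ and a real-valued function $g$ on the associated Gaussian system with $g>0$ on a set $A$ of positive measure, such that the correlations $\int g\cdot g\circ T^r\,d\mu$ force $A\cap T^{-r}A=\varnothing$ for $r\in R$. A more robust route is to take a function $f=\phi(\xi_0)$, where $\xi$ is a stationary complex Gaussian process with spectral measure $\sigma$. We then want the process $(\xi_n)$ to satisfy almost surely the pointwise condition that $\xi_0$ and $\xi_r$ are far apart for all $r\in R$. Since pointwise almost-sure statements at infinitely many times are hard to get from a single Gaussian, I will use a finite product of Gaussian shifts. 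Coordinates $1,\dots,m$ will be designed to handle the finitely many "Bohr-type" times (all $r\in R$ up to some large $N$, including $F$). Further coordinates, one for each $E_i$, will handle the tails.

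\emph{Step 1 (rotation part).} Let $\sigma_a$ be a continuous probability measure concentrated very near $\alpha_a$, for instance a Cantor-type measure of tiny support around $\alpha_a$, chosen so that $\widehat{\sigma_a}(r)$ is close to $\ee{r\alpha_a}$ for $r\le N$. The Gaussian process $\xi^{(a)}$ with spectral measure $\sigma_a$ then behaves on the window $[0,N]$ like $Z\ee{n\alpha_a}$ with $Z$ complex Gaussian, up to a small error in $L^2$. Take $A_1=\{|\xi^{(a)}_0|\in[1,2]\ \forall a,\ \text{error small}\}$ and restrict further to an arc condition on $\arg \xi^{(a)}_0$. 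Then \eqref{eq:rotation-separation} gives, for each $r\le N$ in $R$, some $a$ with $\xi^{(a)}_r$ rotated by at least about $\delta$ from $\xi^{(a)}_0$. Since the window is finite, Chebyshev plus a union bound makes the failure set small but not empty. To get \emph{exact} emptiness I intersect with the event that the errors are uniformly below a threshold, which has positive probability. The separation then holds deterministically on $A_1$.

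\emph{Step 2 (tail part).} By \eqref{eq:tail-gcd}, choose $N$ so large that $q_i:=\gcd\{r\in E_i:r\ge N\}$ is huge. For $E_i$, choose a continuous measure $\tau_i$ concentrated near the rational $1/(2q_i)$, or better near points $\beta$ with $q_i\beta\approx 1/2$ modulo small perturbations. A cleaner choice is a continuous measure supported on $\{\beta: \|q_i\beta-1/2\|<\eta\}$. For every $r\in E_i$ with $r\ge N$ we have $r=q_i k$, so $r\beta \equiv k/2 +O(k\eta)$. This is not uniform in $k$, which is the difficulty. I would instead pick a nested sequence of tail-gcd moduli $q_i^{(1)}\mid q_i^{(2)}\mid\cdots$ with $q_i^{(j)}\to\infty$, and a Kronecker/Cantor-type measure supported on $\beta=\sum_j c_j/q_i^{(j)}$. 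For each $r$ in the $j$-th tail block, the value $r\beta$ is determined modulo $1$ by the finitely many earlier digits. Arranging the digits so that $\|r\beta - 1/2\|$ stays bounded below, via a martingale-type choice akin to Maucourant/Grivaux constructions, yields a continuous $\tau_i$ with $\mathrm{Re}\,\widehat{\tau_i}(r)\le -c<0$ uniformly on the tail. I then need the Gaussian process itself, and not only its correlation, to separate. The route is to realize $\tau_i$ as the law of a random frequency: use a Gaussian process whose spectral measure is supported on the Cantor set $K_i$ where $\|r\beta-1/2\|\ge c$ for all tail $r$ and all $\beta\in K_i$. Then one needs a pathwise separation statement, and this is the main obstacle. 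Linear Gaussian quantities do not see pointwise frequency constraints. Finally, I would attempt the standard trick of using a positive-definite kernel $\psi(\beta)$ and Wiener chaos (e.g.\ $|\xi|^2$ or sign functions), converting the uniform negativity of correlations into disjointness via a threshold set. If that fails, I would fall back to an explicit generic-point/coding argument on the Gaussian shift.

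\emph{Step 3.} Let $T$ be the product of all these Gaussian shifts. Since every spectral measure is continuous, each factor is weakly mixing and so is the product. Set $A=A_1\cap A_2$ with positive measure by independence, and check $A\cap T^{-r}A=\varnothing$ separately for $r\le N$ (Step 1) and for $r\in E_i$, $r\ge N$ (Step 2).
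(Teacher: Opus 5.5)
There is a genuine gap, and it is in Step~2, where all the difficulty lies. Step~1 only concerns finitely many times, and any finite set of times can already be avoided in a Gaussian shift, as in Lemma~\ref{lem:finite-times}. Your tail construction uses only the divisibility hypothesis \eqref{eq:tail-gcd}; the rotation hypothesis \eqref{eq:rotation-separation} is never used for $r\ge N$.

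No argument of that shape can succeed. The set $R_*=\bigcup_{k}\{j2^k:1\le j\le k\}$ of Remark~\ref{rem:recurrence-boundary} satisfies \eqref{eq:tail-gcd} and is a set of recurrence. In particular, the spectral target you aim for is impossible there. Suppose $\tau$ were a probability measure with $\operatorname{Re}\widehat\tau(r)\le -c$ for all sufficiently large $r\in R_*$. Then for large $k$, positive definiteness gives
\[
0\le\int_\T\Bigl|\sum_{i=0}^k\ee{i2^kt}\Bigr|^2\,d\tau(t)
=(k+1)+\sum_{i\ne i'}\operatorname{Re}\widehat\tau\bigl((i-i')2^k\bigr)
\le (k+1)-c\,k(k+1),
\]
which fails once $k>1/c$.

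Your digit heuristic is also reversed. For $r$ in the $j$-th block, the earlier digits drop out modulo $1$ after multiplying by $r$. The trouble is the next digit, whose contribution $rc_{j+1}/q^{(j+1)}$ is neither an integer nor small. This is why the paper splits the digits into two interleaved series by parity and splits $E_i$ into two alternating families of blocks. Finally, you leave open how correlation information could give exact disjointness at infinitely many times. So even granting the spectral step, the proof does not close.

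The missing idea is that the mechanism of your Step~1 already works for infinitely many times once the errors are summable. The paper first builds, for each $E_i$, a partition $E_i=C_0\sqcup C_1$ and continuous measures $\nu_\epsilon$ with $\sum_{r\in C_\epsilon}\sup_{t\in\supp\nu_\epsilon}|\ee{rt}-1|^2<\eta$ (Lemma~\ref{lem:measures}). Thus $\nu_\epsilon$ imitates the point mass at $0$ along $C_\epsilon$, and this is the only place \eqref{eq:tail-gcd} enters.

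Then \eqref{eq:rotation-separation} enters. Split $C_\epsilon$ according to which $\alpha_a$ separates $r$, and take the Gaussian shift whose spectral measure is $\nu_\epsilon$ translated by $\alpha_a$. With $\zeta_a=\ee{\alpha_a}$, one gets $\E|Z_r-\zeta_a^rZ_0|^2=\int_\T|\ee{rt}-1|^2\,d\nu_\epsilon(t)$. For $C=\{|Z_0-1|<\delta/8\}$, Chebyshev gives $\sum_r\mu(C\cap T^{-r}C)<4\eta/\delta^2<\mu(C)/2$ for small $\eta$. Then $A=C\setminus\bigcup_rT^{-r}C$ has positive measure and satisfies $A\cap T^{-r}A=\varnothing$ exactly (Proposition~\ref{prop:summable}). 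No Wiener chaos or pathwise frequency argument is needed; a finite product over these pieces and $F$ finishes the proof.
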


Condition~\eqref{eq:tail-gcd} says that arbitrarily large integers
divide all but finitely many members of each $E_i$. 
Condition~\eqref{eq:rotation-separation} says that $R$ is not Bohr
recurrent; compare the definition in~\cite[Section~1.1]{FHK}.
The divisibility condition alone does not suffice, as the example in
Remark~\ref{rem:recurrence-boundary} shows. For $S_{\cP}$, assign each integer to a prime with maximal exponent.
The resulting sets satisfy~\eqref{eq:tail-gcd}, and a prime outside
$\cP$ gives~\eqref{eq:rotation-separation}.
Section~\ref{sec:applications} also gives an infinitely generated
example and proves non-recurrence for $(S_{\cP}+h)\cap\N$
for every fixed $h\in\Z$.

The Gaussian construction associates continuous measures on the circle
with weakly mixing systems; see \cite[Section~2]{Grivaux}.
Related constructions of perfect sets and continuous measures
under divisibility assumptions appear in Eisner and
Grivaux~\cite[Propositions~3.8 and~3.9]{EG}.
Grivaux's non-recurrence criterion
\cite[Theorems~1.1 and~2.6]{Grivaux} combines separation by a rotation
with uniform approximation: for an infinite set $Q\subset\N$
containing $1$, one requires unit complex numbers $\lambda_k\ne1$
such that $\sup_{r\in Q}|\lambda_k^r-1|\to0$. For a single prime, weakly mixing non-recurrence, including the corresponding result for integer translates, follows from Grivaux~\cite[Theorem~1.2]{Grivaux}.
For multiplicative semigroups, Badea and Grivaux
\cite[Section~6.1]{BG} group the elements according to a maximal
prime exponent and construct continuous measures whose Fourier
coefficients tend to $1$ within each group.
This convergence alone does not ensure that
\[
\sum_{r\in Q}\int_\T|\ee{rt}-1|^2\,d\nu(t)
\]
is small enough for the Gaussian argument.
It also does not give the uniform approximation
required by Grivaux's criterion.

Our proof starts with a finite partition of $S_{\cP}$.
Following Badea and Grivaux~\cite[Section~6.1]{BG}, we assign
each element to a prime with maximal exponent in its
factorization. In each part,
the exponent of the chosen prime tends to infinity as the
elements tend to infinity. Thus every power of that prime
divides all sufficiently large elements of the part.

Fix $\eta>0$, to be chosen sufficiently small below,
and one part $E$. Choose denominators $D_j$ and integers
$ 1=N_0<N_1<N_2<\cdots$
in the order
$D_1,N_1,D_2,N_2,\ldots,$
so that $D_j$ divides every $r\in E$ with $r\ge N_j$.
For $\epsilon=0,1$, let $Q_\epsilon$ consist of those
$r\in E$ lying in an interval $[N_j,N_{j+1})$
with $j\equiv\epsilon\pmod2$.
For each $Q_\epsilon$, consider the random series
\[
\Theta_\epsilon
=
\sum_{\substack{\ell\ge1\\ \ell\equiv\epsilon\pmod2}}
\frac{\omega_\ell}{D_\ell},
\qquad \epsilon=0,1,
\]
where the digits $\omega_\ell$ are independent, each equal
to $0$ or $1$ with probability $1/2$. We take the sums
modulo one. Choose the denominators large enough and make them grow
fast enough that, for each $\epsilon$, different choices
of the digits used in $\Theta_\epsilon$ give different
points of $\T$. Each such digit sequence has probability
zero, so the distribution of each series has no atoms.

Now take $r\in Q_\epsilon\cap[N_j,N_{j+1})$,
so that $j\equiv\epsilon\pmod2$.
The denominators of the terms of $\Theta_\epsilon$
with indices at most $j$ divide $r$. After multiplication by $r$, these terms are integers.
They therefore do not change the value of
$\ee{r\Theta_\epsilon}$, which depends only on the
remaining terms.
Since $j+1$ has the opposite parity, the remaining
terms have indices $j+2,j+4,\ldots$.

When we choose $D_{j+2}$, the endpoint $N_{j+1}$ is
already fixed, and every $r$ in the interval satisfies
$r<N_{j+1}$. We can therefore choose $D_{j+2}$ and the
later denominators large enough to make
$|\ee{r\Theta_\epsilon}-1|$ as small as needed,
uniformly for all $r\in Q_\epsilon\cap[N_j,N_{j+1})$
and all choices of the digits. Since there are finitely many
such $r$, we can also make the sum of the squared bounds
over the interval as small as needed. We choose the
bounds for these interval sums so that their total is
less than $\eta$. Since the bounds do not depend on the
digits, the same estimates hold on the supports of
the measures.
Grivaux~\cite[Fact~4.2 and Section~4]{Grivaux} also uses
alternating finite intervals to obtain summable errors.
Here we choose each denominator to divide all sufficiently
large elements of the corresponding part.

Thus, for every $\eta>0$, we obtain a finite partition
of $S_{\cP}$ and, for each part $Q$, a continuous probability
measure $\nu$ on $\T$ such that
\[
\sum_{r\in Q}
\sup_{t\in\supp\nu}|\ee{rt}-1|^2<\eta.
\]
The partition and measures may depend on $\eta$.

We now use this estimate to build a weakly mixing system
with no returns along $Q$. Take a prime $q\notin\cP$ and
put $\zeta=\ee{1/q}$. No element of $S_{\cP}$ is divisible
by $q$. Hence there is a constant $\delta>0$ such that
$|\zeta^r-1|\ge\delta
$ for all $r\in S_{\cP}.$
Translate $\nu$ by $1/q$ to obtain a continuous probability
measure $\sigma$. Let $(X,\mu,T)$ be the complex Gaussian
shift associated with $\sigma$, with coordinates
$Z_k=Z_0\circ T^k$. The shift is invertible and weakly mixing.
Its coordinates satisfy
\[
\E|Z_r-\zeta^rZ_0|^2
=
\int_\T|\ee{rt}-1|^2\,d\nu(t).
\]
Thus the sum of these mean square errors over $r\in Q$
is less than $\eta$.

Set $a=\delta/8$ and let
$C=\{|Z_0-1|<a\}.$
Since $Z_0$ is standard complex Gaussian, $\mu(C)>0$
depends only on $a$, not on $\nu$.
On $C\cap T^{-r}C$, we have$|Z_0-1|<a$,$|Z_r-1|<a$.
Together with $|\zeta^r-1|\ge\delta$, this gives $|Z_r-\zeta^rZ_0|
>
\delta-2a
>
\frac{\delta}{2}.$
By Markov's inequality,
\[
\sum_{r\in Q}\mu(C\cap T^{-r}C)
\le
\frac{4}{\delta^2}
\sum_{r\in Q}\E|Z_r-\zeta^rZ_0|^2
<
\frac{4\eta}{\delta^2}.
\]
Since $\mu(C)$ depends only on $a$, we apply the preceding
construction with $\eta$ small enough that
$4\eta/\delta^2<\mu(C)/2$.

Remove from $C$ all points that return to $C$ at a time
in $Q$, and set
$A=C\setminus\bigcup_{r\in Q}T^{-r}C.$
The sum bound gives $\mu(A)>\mu(C)/2>0$.
Also, $A\subset C$ and $A\cap T^{-r}C=\varnothing$
for every $r\in Q$. Therefore
$A\cap T^{-r}A=\varnothing$ for all $r\in Q$.
The bound on the sum is needed because we remove the
return events for all $r\in Q$.

Apply this construction to each part of the partition.
Take the product of the systems and the product of the
sets. The finite product is weakly mixing, and the product
set has positive measure. Every $r\in S_{\cP}$ belongs
to one part. The set in that coordinate has no return
at time $r$, so neither does the product set.

The arithmetic construction only uses arbitrarily large
integers that divide all sufficiently large elements of
each part. It therefore applies beyond finitely generated
multiplicative semigroups. If finitely many rotations give a uniform separation
from $1$, we further partition each part so that
one rotation gives this separation on each new part. The same Gaussian
construction and finite product argument then prove
Theorem~\ref{thm:general}.
The choice of denominators also gives the uniform
approximation used in Corollary~\ref{cor:translates}
to prove fixed integer translates via Grivaux's criterion.

\smallskip

\noindent\textbf{Organization of the paper.}
Section~\ref{sec:preliminaries} recalls the necessary preliminaries and establishes
a Gaussian criterion for non-recurrence.
Section~\ref{sec:arithmetic} constructs continuous measures with summable
Fourier errors and proves the general theorem.
Section~\ref{sec:applications} applies this result to multiplicative semigroups,
integer translates, and symbolic systems.
\section{Preliminaries and Gaussian non-recurrence}\label{sec:preliminaries}

\subsection{Probability measures and weak mixing}
A probability space $(X,\cB,\mu)$ consists of a set $X$, a
$\sigma$-algebra $\cB$, and a countably additive measure with $\mu(X)=1$.
We work with standard Borel spaces: these are measurable spaces
isomorphic to a complete separable metric space with its Borel
$\sigma$-algebra. A measure has no atoms if every measurable set of positive
measure contains a measurable subset of strictly smaller positive
measure. On a standard Borel space this is equivalent to every singleton
having measure zero. A Borel measure on $\T$ is called continuous when
it has no atoms.

For a measurable map $f:X\to Y$, its pushforward is
$(f_*\mu)(B)=\mu(f^{-1}(B)).$
For every nonnegative measurable function $h$, and every integrable $h$,
\[
 \int_Y h\,d(f_*\mu)=\int_X h\circ f\,d\mu.
\]
For a Borel probability measure on a separable metric space, its support is
\[
 \supp\mu=\{x:\mu(V)>0\text{ for every open neighbourhood }V\text{ of }x\}.
\]
It is closed and has full measure: its complement is a countable union
of measure-zero basic open sets. The measure has full support on $X$
if every nonempty open subset of $X$ has positive measure.
We write $1_A$ for the indicator of $A$.

An invertible measure-preserving system $(X,\cB,\mu,T)$ consists of a probability space
and a
measurable bijection $T:X\to X$ with measurable inverse and $T_*\mu=\mu$.
For $r\in\Z$, we use $T^{-r}A=\{x:T^rx\in A\}$.
The transformation is ergodic if
$\mu(T^{-1}A\mathbin{\triangle}A)=0$ implies $\mu(A)\in\{0,1\}$,
where $\triangle$ denotes symmetric difference. It is weakly mixing if, for all
bounded measurable complex functions $f,g$,
\begin{equation}\label{eq:wm}
 \frac1N\sum_{n=1}^N
 \left|\int f(g\circ T^n)\,d\mu
          -\int f\,d\mu\int g\,d\mu\right|\longrightarrow0.
\end{equation}
Equivalently, $T\times T$ is ergodic for $\mu\times\mu$;
see \cite{CFS}. Taking indicators gives the equivalent set formulation
with $\mu(A\cap T^{-n}B)-\mu(A)\mu(B)$.

For a Borel probability measure $\sigma$ on $\T$, write
\[
 \widehat\sigma(n)=\int_\T\ee{nt}\,d\sigma(t)\qquad(n\in\Z).
\]
Then $\widehat\sigma(0)=1$ and
$\widehat\sigma(-n)=\overline{\widehat\sigma(n)}$.
For a topological space $X$, we write $\mathcal B(X)$
for its Borel $\sigma$-algebra.
All sequence spaces below carry the product topology,
with $\C$ carrying its usual topology and $\{0,1\}$
the discrete topology.

\subsection{Random variables and Gaussian shifts}\label{sec:gaussian}
Let $(\Omega,\mathcal F,\rho)$ be a probability space.
A complex random variable is a measurable map $W:\Omega\to\C$.
For integrable $W$, write $\E_\rho W=\int_\Omega W\,d\rho$,
omitting the subscript when the measure is clear. The distribution of a random vector
$W:\Omega\to\C^m$ is the pushforward measure $W_*\rho$.

A standard complex Gaussian random variable is a random
variable of the form
\[
Z=\frac{X+iY}{\sqrt{2}},
\]
where $X$ and $Y$ are independent real random variables
with density $(2\pi)^{-1/2}e^{-t^2/2}$ on $\R$.
The density of $Z$ with respect to Lebesgue area measure
on $\C$ is $\pi^{-1}e^{-|z|^2}$. In particular,
\[
\E Z=\E Z^2=0,\qquad \E|Z|^2=1.
\]

\smallskip

The following Gaussian construction is classical;
see \cite[Appendix~C]{Kechris}.
We use its complex-valued formulation from
\cite[Theorem~2.3 and Proposition~2.5]{Ackelsberg}.

\begin{lemma}\label{lem:gaussian}
	Let $\sigma$ be a Borel probability measure on $\T$.
	There exists a Borel probability measure $\gamma_\sigma$
	on $\C^\Z$ such that the coordinate functions
	$Z_k(z)=z_k$, $k\in\Z$, are standard complex Gaussian
	random variables satisfying
	\[
	\begin{aligned}
		\E(Z_k\overline{Z_l})&=\widehat\sigma(k-l),\\
		\E(Z_kZ_l)&=0
	\end{aligned}
	\qquad(k,l\in\Z).
	\]
	The measure $\gamma_\sigma$ has no atoms.
	The shift
	\[
	T:\C^\Z\to\C^\Z,\qquad
	(Tz)_k=z_{k+1},
	\]
	is invertible and preserves $\gamma_\sigma$.
	If $\sigma$ has no atoms, then
	$(\C^\Z,\gamma_\sigma,T)$ is weakly mixing.
\end{lemma}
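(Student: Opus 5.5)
The statement to prove is Lemma~\ref{lem:gaussian}: the existence of the Gaussian measure $\gamma_\sigma$, invariance under the shift, nonatomicity, and weak mixing when $\sigma$ is continuous. We follow the classical route of \cite[Appendix~C]{Kechris} and \cite{Ackelsberg}, in four steps.

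\textbf{Construction.} First, the kernel $K(k,l)=\widehat\sigma(k-l)$ is positive semidefinite, since
\[
\sum_{k,l}c_k\overline{c_l}\,\widehat\sigma(k-l)=\int_\T\Bigl|\sum_k c_k\ee{kt}\Bigr|^2d\sigma\ge0.
\]
For each finite $F\subset\Z$ we take the centered circularly symmetric complex Gaussian vector on $\C^F$ with covariance $(K(k,l))_{k,l\in F}$ and pseudo-covariance $0$. Equivalently, this is the real Gaussian vector on $\R^{2F}$ whose covariance is built from $\tfrac12\operatorname{Re}K$ and $\tfrac12\operatorname{Im}K$, which is positive semidefinite. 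These finite-dimensional laws are consistent under restriction, because marginals of Gaussians are Gaussian with the restricted covariance. Kolmogorov's extension theorem on the Polish space $\C^\Z$ then gives $\gamma_\sigma$. Each $Z_k$ is standard complex Gaussian because $K(k,k)=1$ and the pseudo-variance is $0$.

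\textbf{Invariance and atoms.} The law of $(Z_{k+1})_k$ has the same finite-dimensional covariances, since $K(k+1,l+1)=K(k,l)$. Gaussian laws are determined by their covariances, so $T_*\gamma_\sigma=\gamma_\sigma$. The shift is a homeomorphism, hence invertible and bimeasurable. For nonatomicity, the marginal of $Z_0$ has a density, so every singleton $\{z\}$ satisfies $\gamma_\sigma(\{z\})\le\mathbb P(Z_0=z_0)=0$.

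\textbf{Weak mixing.} This is the main step. We use the Wiener chaos (Fock space) decomposition
\[
L^2(\gamma_\sigma)=\bigoplus_{p,q\ge0}H_{p,q},
\]
where $H_{p,q}$ is spanned by Wick products of $p$ factors $Z_k$ and $q$ factors $\overline{Z_l}$. The Koopman operator preserves this decomposition. On the closed span $H_{1,0}$ of the $Z_k$, the map $Z_k\mapsto \ee{k\cdot}$ is unitarily equivalent to multiplication by $\ee{t}$ on $L^2(\sigma)$, so the spectral measure there is $\sigma$. On $H_{0,1}$ the spectral measure is the reflection of $\sigma$. On $H_{p,q}$ the maximal spectral type is absolutely continuous with respect to the convolution $\sigma^{*p}*\tilde\sigma^{*q}$. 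Convolutions of continuous measures are continuous, so on $L^2\ominus\C$ the spectral measure has no atoms. Hence there is no nonconstant eigenfunction. By the spectral characterization, Wiener's lemma applied to $|\langle U^nf,g\rangle|^2$ yields \eqref{eq:wm}.

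The obstacle is justifying the chaos decomposition together with its spectral types, in particular density of polynomials in $L^2$ and the tensor identification. Rather than reproving this, we cite \cite[Theorem~2.3 and Proposition~2.5]{Ackelsberg}.
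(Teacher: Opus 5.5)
Your proposal is correct and follows essentially the same route as the paper: positive definiteness of $\widehat\sigma$, a Gaussian construction (which you carry out explicitly via Kolmogorov extension, where the paper simply invokes \cite[Theorem~2.3]{Ackelsberg}), the identical density argument for nonatomicity, and \cite[Proposition~2.5]{Ackelsberg} for weak mixing. The only point to tidy concerns relying on that citation rather than on your spectral sketch (which already works for $T$ directly). The paper reads Ackelsberg's result for the shift $(Sz)_k=z_{k-1}$ and passes to $T=S^{-1}$ because weak mixing is preserved under inversion, so you should add this remark and note that your Kolmogorov measure coincides with Ackelsberg's, since both are determined by the same covariance and zero pseudo-covariance.
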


\begin{proof}
	Set
$\varphi(n)=\widehat\sigma(n),$ for any $ n\in\Z.$
	Since $\sigma$ is a probability measure,
	$\varphi(0)=1$. Moreover, for any
	$n_1,\ldots,n_m\in\Z$ and $c_1,\ldots,c_m\in\C$,
	\[
	\begin{aligned}
		\sum_{a,b=1}^m
		\overline{c_a}c_b\varphi(n_b-n_a)
	=
		\int_\T\left|
		\sum_{b=1}^m c_b\ee{n_bt}
		\right|^2\,d\sigma(t)\ge0.
	\end{aligned}
	\]
	Thus $\varphi$ is positive definite, and
	\cite[Theorem~2.3]{Ackelsberg} applies.

	That theorem gives a Borel probability measure
	$\gamma_\sigma$ on $\C^\Z$, invariant under the shift
$(Sz)_k=z_{k-1}.$
	The coordinate functions $Z_k(z)=z_k$ satisfy
	\[
	\begin{aligned}
		\E Z_k&=0,\\
		\E(Z_k\overline{Z_l})
		&=\varphi(k-l)=\widehat\sigma(k-l),\\
		\E(Z_kZ_l)&=0
	\end{aligned}
	\qquad(k,l\in\Z).
	\]
	Since these variables are Gaussian and
	$\widehat\sigma(0)=1$, each $Z_k$ is
	standard complex Gaussian.
	
	\smallskip
	
	Define $T=S^{-1}$. Then
$(Tz)_k=z_{k+1}.$
	Since $S$ is invertible and preserves
	$\gamma_\sigma$, so does $T$.
Since $Z_0$ is standard complex Gaussian, it has
density $\pi^{-1}e^{-|w|^2}$ on $\C$.
Thus $\gamma_\sigma(\{Z_0=a\})=0$ for every $a\in\C$.
For every $z=(z_k)_{k\in\Z}\in\C^\Z$, we have
$\{z\}\subseteq
\{w\in\mathbb C^{\mathbb Z}:Z_0(w)=z_0\}$
and hence
$\gamma_\sigma(\{z\})
\le\gamma_\sigma(\{Z_0=z_0\})=0.$
Therefore $\gamma_\sigma$ has no atoms.

\smallskip
	
	Finally, suppose that $\sigma$ has no atoms.
	By \cite[Proposition~2.5]{Ackelsberg},
	the Gaussian system associated with $\sigma$,
	namely $(\C^\Z,\gamma_\sigma,S)$,
	is weakly mixing.
	Since weak mixing is preserved under inversion,
	$(\C^\Z,\gamma_\sigma,T)$ is also weakly mixing.
\end{proof}

We call $(\C^\Z,\gamma_\sigma,T)$ the complex
Gaussian shift associated with $\sigma$.
For later use, note that $Z_0\circ T^n=Z_n$, so
\[
\int (Z_0\circ T^n)\overline{Z_0}\,
d\gamma_\sigma
=\widehat\sigma(n)
\qquad(n\in\Z).
\]

\subsection{A Gaussian non-recurrence criterion}

The following estimate uses the Gaussian non-recurrence
argument of Grivaux~\cite[Proposition~4.5]{Grivaux};
see also \cite[Section~4]{BDJLR}.
It combines separation by a rotation with a summable bound
on return events.
We formulate the argument for coordinate Gaussian shifts
and include explicit constants for later use.

\begin{proposition}\label{prop:summable}
	Let $Q\subset\N$ be nonempty, and suppose that
	$\zeta\in\C$ and $\delta>0$ satisfy
	\[
	|\zeta|=1,\qquad
	|\zeta^r-1|\ge\delta
	\quad(r\in Q).
	\]
	Set
	\[
	a=\frac{\delta}{8},
	\qquad
	c_\delta=a^2e^{-(1+a)^2}.
	\]
	Suppose there is a continuous Borel probability
	measure $\nu$ on $\T$ such that
	\[
	\sum_{r\in Q}\int_\T
	|\ee{rt}-1|^2\,d\nu(t)
	<\frac{\delta^2c_\delta}{8}.
	\]
	Then there exist an invertible weakly mixing
	complex Gaussian shift $(X,\cB,\mu,T)$ and
	a Borel set $A\subset X$ satisfying
	\[
	\mu(A)>\frac{c_\delta}{2},
	\qquad
	A\cap T^{-r}A=\varnothing
	\quad(r\in Q).
	\]
\end{proposition}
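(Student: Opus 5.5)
We will follow the outline given in the introduction, keeping track of the explicit constants. Write $\zeta=\ee{\theta}$ with $\theta\in\T$. Let $\sigma$ be the pushforward of $\nu$ under the translation $t\mapsto t+\theta$. Then $\sigma$ is a continuous Borel probability measure on $\T$. Let $(X,\cB,\mu,T)=(\C^\Z,\gamma_\sigma,T)$ be the complex Gaussian shift from Lemma~\ref{lem:gaussian}; it is invertible and weakly mixing because $\sigma$ has no atoms. Using the covariance relations of Lemma~\ref{lem:gaussian} and $|\zeta|=1$, we expand
\[
\E|Z_r-\zeta^rZ_0|^2=2-2\,\mathrm{Re}\bigl(\overline{\zeta^r}\,\widehat\sigma(r)\bigr)
=\int_\T|\ee{r(t+\theta)}-\zeta^r|^2\,d\nu(t)=\int_\T|\ee{rt}-1|^2\,d\nu(t).
\]
Summing over $r\in Q$, the total mean square error is less than $\delta^2c_\delta/8$.

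Set $a=\delta/8$ and $C=\{|Z_0-1|<a\}$. We next bound $\mu(C)$ from below by $c_\delta$. Since $Z_0$ has density $\pi^{-1}e^{-|w|^2}$, and $|w|\le 1+a$ on the disc of radius $a$ about $1$, we get
\[
\mu(C)\ge \pi^{-1}\cdot\pi a^2\cdot e^{-(1+a)^2}=c_\delta .
\]

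Fix $r\in Q$ and take a point of $C\cap T^{-r}C$, so that $|Z_0-1|<a$ and $|Z_r-1|<a$. By the triangle inequality,
\[
|Z_r-\zeta^rZ_0|\ge|\zeta^r-1|-|Z_r-1|-|\zeta^r(1-Z_0)|>\delta-2a=\tfrac{3\delta}{4}>\tfrac{\delta}{2}.
\]
Markov's inequality then gives
\[
\mu(C\cap T^{-r}C)\le \frac{4}{\delta^2}\,\E|Z_r-\zeta^rZ_0|^2 .
\]
Summing over $r\in Q$,
\[
\sum_{r\in Q}\mu(C\cap T^{-r}C)<\frac{4}{\delta^2}\cdot\frac{\delta^2c_\delta}{8}=\frac{c_\delta}{2}.
\]

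Now put
\[
A=C\setminus\bigcup_{r\in Q}T^{-r}C .
\]
This is a Borel set, since $Q$ is countable and $Z_0$, $T$ are Borel. Subadditivity gives
\[
\mu(A)\ge\mu(C)-\sum_{r\in Q}\mu(C\cap T^{-r}C)>c_\delta-\tfrac{c_\delta}{2}=\tfrac{c_\delta}{2}.
\]
For each $r\in Q$ we have $A\subset C$ and $A\cap T^{-r}C=\varnothing$, hence $A\cap T^{-r}A\subset A\cap T^{-r}C=\varnothing$. The emptiness holds pointwise, not merely up to null sets, because $A$ was defined by removing the return sets themselves.

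There is no serious obstacle. The only points that need care are the Fourier identity after translation, namely $\widehat\sigma(r)=\zeta^r\widehat\nu(r)$, and the explicit density lower bound giving $c_\delta$.
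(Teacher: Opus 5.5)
Your proposal is correct and follows the paper's proof step for step. You translate $\nu$ by $\zeta$ to obtain $\sigma$, use $\widehat\sigma(r)=\zeta^r\widehat\nu(r)$ to get $\E|Z_r-\zeta^rZ_0|^2=\int_\T|\ee{rt}-1|^2\,d\nu(t)$, bound $\mu(C)\ge c_\delta$ from the Gaussian density, apply Markov's inequality on $C\cap T^{-r}C$, and remove all return sets from $C$, exactly as the paper does.
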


\begin{proof}
Choose $\alpha\in\T$ with $\zeta=\ee{\alpha}$.
Define a Borel probability measure $\sigma$ on
$\T$ by
\[
\sigma(B)=\nu(B-\alpha)
\]
for every Borel set $B\subset\T$.
Since $\nu$ is continuous, so is $\sigma$.
Moreover,
\[
\begin{aligned}
	\widehat\sigma(r)
	=\int_\T \ee{r(t+\alpha)}\,d\nu(t)
	=\zeta^r\widehat\nu(r).
\end{aligned}
\]
By Lemma~\ref{lem:gaussian}, let
\[
X=\C^\Z,\qquad
\cB=\mathcal B(\C^\Z),\qquad
\mu=\gamma_\sigma,
\]
and define
\[
T:X\to X,\qquad
(Tz)_k=z_{k+1}.
\]
Then $(X,\cB,\mu,T)$ is an invertible weakly
mixing measure preserving system. For $k\in\Z$,
let $Z_k:X\to\C$ be the coordinate function
$Z_k(z)=z_k$. Each $Z_k$ is a standard complex Gaussian
random variable. In particular,
\[
Z_0(T^r z)=Z_r(z),
\qquad
\E_\mu(Z_r\overline{Z_0})
=\widehat\sigma(r).
\]

Since $\E_\mu|Z_r|^2=\E_\mu|Z_0|^2=1$,
we have
\[
\begin{aligned}
	\E_\mu|Z_r-\zeta^rZ_0|^2
	&=2-2\operatorname{Re}
	\bigl(\overline\zeta^{\,r}
	\E_\mu(Z_r\overline{Z_0})\bigr)\\
	&=2-2\operatorname{Re}
	\bigl(\overline\zeta^{\,r}\widehat\sigma(r)\bigr)\\
	&=2-2\operatorname{Re}\widehat\nu(r)\\
	&=\int_\T|\ee{rt}-1|^2\,d\nu(t).
\end{aligned}
\]
	
	Let
	$
	C=\{x\in X:|Z_0(x)-1|<a\}.
	$
	Since $Z_0$ is standard complex Gaussian, it has
	density $\pi^{-1}e^{-|z|^2}$ on $\C$. Thus,
we have
	\[
	\begin{aligned}
		\mu(C)
		&=\frac1\pi\int_{|z-1|<a}
		e^{-|z|^2}\,dA(z)\\
		&\ge \frac1\pi e^{-(1+a)^2}\cdot\pi a^2\\
		&=a^2e^{-(1+a)^2}=c_\delta,
	\end{aligned}
	\]
	where $dA$ denotes Lebesgue area measure on $\C$.
	
	Fix $r\in Q$. If $x\in C\cap T^{-r}C$,
	then
	\[
	|Z_0(x)-1|<a,\qquad |Z_r(x)-1|<a.
	\]
	Since $|\zeta^r-1|\ge\delta$, we have
	\[
	\begin{aligned}
		|Z_r(x)-\zeta^rZ_0(x)|
		&\ge|\zeta^r-1|
		-|Z_r(x)-1|-|Z_0(x)-1|\\
		&>\delta-2a
		=\frac{3\delta}{4}
		>\frac{\delta}{2}.
	\end{aligned}
	\]
	Thus Markov's inequality gives
	\[
	\begin{aligned}
		\mu(C\cap T^{-r}C)
		\le\frac4{\delta^2}
		\E|Z_r-\zeta^rZ_0|^2
		=\frac4{\delta^2}
		\int_\T|\ee{rt}-1|^2\,d\nu(t).
	\end{aligned}
	\]
	
	Now define
	\[
	A=C\setminus\bigcup_{r\in Q}T^{-r}C.
	\]
	By the assumption on $\nu$,
	\[
	\begin{aligned}
		\mu(A)
		&\ge\mu(C)-
		\sum_{r\in Q}\mu(C\cap T^{-r}C)\\
		&>c_\delta-
		\frac4{\delta^2}\cdot
		\frac{\delta^2c_\delta}{8}\\
		&=\frac{c_\delta}{2}.
	\end{aligned}
	\]
	Finally, $A\subset C$ and
	$A\cap T^{-r}C=\varnothing$ for every $r\in Q$.
	Since $T^{-r}A\subset T^{-r}C$, we obtain
	\[
	A\cap T^{-r}A=\varnothing
	\qquad(r\in Q).
	\]
\end{proof}

\subsection{Products and finite sets of times}
We use the following finite product argument to combine the sets of
times; compare \cite[Section~4]{BDJLR}.

\begin{lemma}\label{lem:product}
Let $(X_j,\cB_j,\mu_j,T_j)$, $1\leq j\leq l$, be finitely many
invertible weakly mixing probability-preserving systems, and let
$(X,\cB,\mu,T)$ be their product. Then $T$ is invertible, probability
preserving, and weakly mixing. If $Q_j\subset\N$ and $A_j\in\cB_j$
satisfy $\mu_j(A_j)>0$ and
$A_j\cap T_j^{-r}A_j=\varnothing$ for every $r\in Q_j$, then
$A=\prod_{j=1}^lA_j$ satisfies
\[
 \mu(A)>0,\qquad A\cap T^{-r}A=\varnothing
 \quad\left(r\in\bigcup_{j=1}^lQ_j\right).
\]
\end{lemma}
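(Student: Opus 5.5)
The plan is to take the product system $X=\prod_{j=1}^l X_j$ with $\cB=\bigotimes_j\cB_j$, $\mu=\mu_1\times\cdots\times\mu_l$, and $T=T_1\times\cdots\times T_l$. Invertibility is immediate: $T^{-1}=T_1^{-1}\times\cdots\times T_l^{-1}$ is measurable, since each coordinate of $T^{-1}$ is a measurable map composed with a coordinate projection. Measure preservation also comes first: $T_*\mu$ and $\mu$ agree on measurable rectangles $\prod_j B_j$, because
\[
\mu\Bigl(T^{-1}\prod_j B_j\Bigr)=\prod_j\mu_j(T_j^{-1}B_j)=\prod_j\mu_j(B_j).
\]
Rectangles form a $\pi$-system generating $\cB$, so the two measures are equal.

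For weak mixing, I would argue by induction on $l$, so it suffices to treat two factors. I would use the standard characterization that a system is weakly mixing exactly when its product with every ergodic system is ergodic; equivalently, $S\times U$ is weakly mixing whenever $S$ and $U$ are (see \cite{CFS}). To stay closer to the definition \eqref{eq:wm}, I would instead verify \eqref{eq:wm} directly on products $f=f_1\otimes f_2$, $g=g_1\otimes g_2$ of bounded functions. Write
\[
c_i(n)=\int f_i\,(g_i\circ T_i^n)\,d\mu_i,\qquad m_i=\int f_i\,d\mu_i\int g_i\,d\mu_i .
\]
Then
\[
c_1(n)c_2(n)-m_1m_2=(c_1(n)-m_1)c_2(n)+m_1(c_2(n)-m_2).
\]
Both terms have Cesàro-averaged absolute value tending to $0$, because each $c_i$ is bounded and each factor satisfies \eqref{eq:wm}. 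The same estimate passes to finite linear combinations of such products, since the averages are subadditive. It then extends to all bounded measurable $f,g$ by $L^2$ approximation. The key bound is
\[
\Bigl|\int f(g\circ T^n)\,d\mu-\int f'(g'\circ T^n)\,d\mu\Bigr|\le\|f-f'\|_2\|g\|_2+\|f'\|_2\|g-g'\|_2,
\]
which is uniform in $n$, and the same kind of bound holds for the product of means. This density step is the main technical point. It is routine once one notes that the approximants $f'$, $g'$ can be taken bounded with $\|f'\|_2$ controlled.

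For the recurrence statement, note first that $\mu(A)=\prod_j\mu_j(A_j)>0$. Now let $r\in\bigcup_jQ_j$ and pick $j$ with $r\in Q_j$. Suppose $x=(x_1,\dots,x_l)\in A\cap T^{-r}A$. Then $x_j\in A_j$ and $T_j^rx_j=(T^rx)_j\in A_j$, so $x_j\in A_j\cap T_j^{-r}A_j=\varnothing$, which is a contradiction. Hence $A\cap T^{-r}A=\varnothing$.
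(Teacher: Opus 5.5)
Your proposal is correct, and on the part the paper actually proves it follows the paper's argument: $\mu(A)=\prod_j\mu_j(A_j)>0$, and your coordinatewise contradiction is the paper's identity $A\cap T^{-r}A=\prod_{j}(A_j\cap T_j^{-r}A_j)$, in which one factor is empty. The only difference is that the paper cites \cite[Section~4]{BDJLR} for weak mixing of the finite product and leaves invertibility and measure preservation implicit, whereas you prove \eqref{eq:wm} for the product directly (tensor products, bilinearity, and the $n$-uniform Cauchy--Schwarz approximation bound), which is a valid self-contained replacement for the citation.
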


\begin{proof}
	The product of finitely many weakly mixing systems
	is weakly mixing; see
	\cite[Section~4]{BDJLR}.
	Moreover,
	\[
	\mu(A)=\prod_{j=1}^l\mu_j(A_j)>0,
	\qquad
	A\cap T^{-r}A
	=\prod_{j=1}^l(A_j\cap T_j^{-r}A_j).
	\]
	For $r\in\bigcup_{j=1}^l Q_j$, there exists
	$j$ such that
	\[
	A_j\cap T_j^{-r}A_j=\varnothing.
	\]
	Hence $A\cap T^{-r}A=\varnothing$.
\end{proof}

\begin{lemma}\label{lem:finite-times}
For every finite set $H\subset\N$, there are an invertible weakly
mixing two-sided complex Gaussian shift $(X,\cB,\mu,T)$ and a Borel
set $B$ of positive measure such that
$B\cap T^{-r}B=\varnothing$ for all $r\in H$.
\end{lemma}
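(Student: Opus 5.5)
Since $H$ is finite, the cleanest route is to apply Proposition~\ref{prop:summable} with $Q=H$ and $\nu$ a continuous measure concentrated near $0$. First I need a rotation separating $H$ from $1$. Choose a prime $q$ larger than $\max H$. Then no $r\in H$ is divisible by $q$. Put $\zeta=\ee{1/q}$. Since $H$ is finite and $\zeta^r\neq1$ for each $r\in H$, the quantity
\[
\delta=\min_{r\in H}|\zeta^r-1|
\]
is positive, and $|\zeta^r-1|\ge\delta$ holds on $H$.

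Next I need a continuous Borel probability measure $\nu$ on $\T$ satisfying
\[
\sum_{r\in H}\int_\T|\ee{rt}-1|^2\,d\nu(t)<\frac{\delta^2c_\delta}{8}.
\]
For $t$ represented in $[-\varepsilon,\varepsilon]$ we have $|\ee{rt}-1|\le 2\pi r\varepsilon$. So take $\nu$ to be normalized Lebesgue measure on the arc $[-\varepsilon,\varepsilon]$, which is continuous. The left side is then at most $\sum_{r\in H}4\pi^2r^2\varepsilon^2\le 4\pi^2|H|(\max H)^2\varepsilon^2$. Choosing $\varepsilon>0$ small enough makes this less than $\delta^2c_\delta/8$.

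Proposition~\ref{prop:summable} then produces an invertible weakly mixing complex Gaussian shift $(\C^\Z,\gamma_\sigma,T)$ and a Borel set $A$. This set satisfies $\mu(A)>c_\delta/2>0$ and $A\cap T^{-r}A=\varnothing$ for all $r\in H$. Set $B=A$. The shift is two-sided, since it is the shift on $\C^\Z$ from Lemma~\ref{lem:gaussian}.

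There is no real obstacle. The only points to check are the case $H=\varnothing$ and the requirement in Proposition~\ref{prop:summable} that $Q$ be nonempty. If $H$ is empty, take any continuous $\sigma$ (e.g.\ Lebesgue measure) and $B=X$. Alternatively, replace $H$ by $H\cup\{1\}$, which only strengthens the conclusion.
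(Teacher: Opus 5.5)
Your proof is correct, but it takes a different route from the paper.

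You reduce the finite case to Proposition~\ref{prop:summable}. A modulus $q>\max H$ gives the separating rotation $\zeta=\ee{1/q}$ with $\delta=\min_{r\in H}|\zeta^r-1|>0$. Normalized Lebesgue measure on a short arc around $0$ then makes the finite error sum $\sum_{r\in H}\int_\T|\ee{rt}-1|^2\,d\nu(t)$ as small as needed. There is no circularity, since that proposition is proved independently of this lemma, and your handling of $H=\varnothing$ is fine.

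The paper avoids spectral estimates altogether. It takes $\sigma$ to be Lebesgue measure on $\T$, so the coordinates $Z_k$ are independent standard complex Gaussians, and puts $B=\{|Z_0-1|<1/4\}\cap\bigcap_{k=1}^{L}\{|Z_k|<1/4\}$ with $L=\max(H\cup\{0\})$. A point of $B\cap T^{-r}B$ with $r\in H$ would satisfy both $|Z_r|<1/4$ and $|Z_r-1|<1/4$, which is impossible. Positivity of $\mu(B)$ follows from independence.

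The paper's argument is more elementary: no rotation, no Markov inequality, no choice of $\nu$, and the empty case is automatic. Its system also does not depend on $H$; only $B$ does. Your argument shows that the finite exceptional set is just another instance of the same summable criterion that handles the infinite parts. The cost is that $\zeta$, $\nu$, and hence the system depend on $H$. As a side benefit, your bound $\mu(B)>c_\delta/2$ can be made to decay only polynomially in $\max H$, whereas the paper's set has measure exponentially small in $L$. This is irrelevant here, since only positivity is used.
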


\begin{proof}
	Take the Gaussian shift in Lemma~\ref{lem:gaussian},
	with $\sigma$ equal to normalized Lebesgue measure on $\T$.
	Its coordinates are independent standard complex Gaussian
	variables, and the shift is weakly mixing.
	Put $L=\max(H\cup\{0\})$ and
	\[
	B=\{|Z_0-1|<1/4\}
	\cap\bigcap_{k=1}^L\{|Z_k|<1/4\}.
	\]
By independence and the positivity of the Gaussian
density, $\mu(B)>0$.
For every $r\in H$, if $x\in B\cap T^{-r}B$,
then
\[
|Z_r(x)|<1/4,\qquad
|Z_r(x)-1|
=|Z_0(T^r x)-1|<1/4.
\]
Hence
\[
1\le |Z_r(x)|+|Z_r(x)-1|<1/2,
\]
a contradiction. Thus
$B\cap T^{-r}B=\varnothing$ for all $r\in H$.
\end{proof}

\section{Divisibility and spectral measures}
\label{sec:arithmetic}
We divide the given set of positive integers into two parts
and construct a continuous probability measure for each part.
For each measure, we bound $|\ee{rt}-1|$ uniformly over
its support and make the sum of the squared bounds
over all integers in the corresponding part as small as needed.
We will also use this construction to prove integer
translates in Corollary~\ref{cor:translates}.

The reciprocal-denominator series below are closely related
to the constructions of Eisner and
Grivaux~\cite[Propositions~3.8 and~3.9]{EG}
and Maucourant~\cite[Lemma~4.2]{Maucourant}.
Maucourant uses separate even- and odd-indexed series:
divisibility makes the earlier terms integers after multiplication,
while the remaining terms are small.
Grivaux~\cite[Fact~4.2]{Grivaux} also uses alternating
finite intervals to obtain summable estimates.
Here we use random digits and choose denominators dividing
all sufficiently large elements of the relevant set,
so that the resulting estimates are uniform on the
supports of the measures.

\begin{lemma}\label{lem:measures}
Let $E\subset\N$ be infinite and suppose that
\[
 \gcd\{r\in E:r\geq N\}\longrightarrow\infty\quad(N\to\infty).
\]
Then for every $\eta>0$, there are a partition $E=C_0\sqcup C_1$ into
infinite sets and continuous Borel probability measures $\nu_0,\nu_1$
on $\T$ such that, with $Q_\epsilon=C_\epsilon\cup\{1\}$,
\[
 \sum_{r\in Q_\epsilon}
       \sup_{t\in\supp\nu_\epsilon}|\ee{rt}-1|^2<\eta
 \qquad(\epsilon=0,1).
\]
For each of these fixed sets $Q_\epsilon$, there are unit complex
numbers $\lambda_{\epsilon,k}\ne1$ satisfying
\begin{equation}\label{eq:uniform-approximation}
 \lambda_{\epsilon,k}\longrightarrow1,\qquad
 \sup_{r\in Q_\epsilon}|\lambda_{\epsilon,k}^{\,r}-1|
     \longrightarrow0\quad(k\to\infty).
\end{equation}
The partition and measures may depend on $\eta$.
\end{lemma}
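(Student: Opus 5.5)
The plan follows the construction sketched in the introduction: random reciprocal-denominator series, with the even- and odd-indexed digits kept separate. I will choose denominators $D_1,D_2,\ldots$ and cut points $1=N_0<N_1<N_2<\cdots$ in the order $D_1,N_1,D_2,N_2,\ldots$, subject to four requirements.

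\begin{itemize}
\item[(i)] $D_j$ divides every $r\in E$ with $r\ge N_j$.
\item[(ii)] $D_1\ge 2$ and $D_{j-1}\mid D_j$.
\item[(iii)] $D_{j+1}\ge 4D_j$, and $D_j\ge 2^{j}K_j N_{j-1}^{3/2}\eta^{-1/2}$ for a suitable absolute constant $K_j$ (fixed below).
\item[(iv)] $E\cap[N_j,N_{j+1})\neq\varnothing$ for every $j\ge 0$.
\end{itemize}

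To meet (i) and (ii), given $N_{j-1}$ I use the tail-gcd hypothesis to pick $M_j\ge N_{j-1}$ such that $D_j:=\gcd\{r\in E:r\ge M_j\}$ satisfies (iii). Since the tail sets decrease, the gcd of a smaller set is a multiple of the gcd of a larger one, so $D_{j-1}\mid D_j$ holds automatically. I then choose $N_j\ge M_j$ so large that (iv) holds for the interval $[N_{j-1},N_j)$; this is possible because $E$ is infinite. Finally I put
\[
C_\epsilon=\bigcup_{j\equiv\epsilon\ (2)}E\cap[N_j,N_{j+1}),
\]
which are infinite by (iv), and let $\nu_\epsilon$ be the law of $\Theta_\epsilon=\sum_{\ell\equiv\epsilon}\omega_\ell/D_\ell \bmod 1$ under i.i.d.\ fair digits $\omega_\ell\in\{0,1\}$ (indices $\ell\ge1$).

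Next I check continuity and identify the support. The map from $\{0,1\}^{\{\ell\equiv\epsilon\}}$ to $\T$ is continuous, and the product space is compact. Hence $\supp\nu_\epsilon$ is contained in the image, which is closed, and it suffices to bound $|\ee{rt}-1|$ for every $t$ in the image, that is, uniformly over all digit choices. For injectivity, take two digit sequences and let $\ell$ be the first index where they differ. The difference of the two sums is $\pm1/D_\ell$ plus a tail bounded by $\sum_{m>\ell}1/D_m\le 1/(2D_\ell)$, using the growth condition in (iii). Its absolute value therefore lies in $[1/(2D_\ell),3/(2D_\ell)]\subset(0,1)$, so it is nonzero mod $1$. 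Each single sequence has Bernoulli measure zero, so $\nu_\epsilon$ has no atoms.

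The main estimate is as follows. Take $r\in C_\epsilon\cap[N_j,N_{j+1})$ with $j\equiv\epsilon$. For $\ell\le j$ we have $D_\ell\mid D_j\mid r$, so $r\omega_\ell/D_\ell\in\Z$ and these terms drop out. The next index of the same parity is $j+2$, so
\[
|r\Theta_\epsilon \bmod 1|\le N_{j+1}\sum_{\ell\ge j+2}D_\ell^{-1}\le 2N_{j+1}/D_{j+2}.
\]
This gives $|\ee{rt}-1|\le 4\pi N_{j+1}/D_{j+2}$ on the support. Summing the squares over the at most $N_{j+1}$ such $r$ gives at most $16\pi^2N_{j+1}^3/D_{j+2}^2$. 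By (iii), with $K_j$ chosen appropriately, this is below $\eta 2^{-j-3}$. The element $r=1$ is handled the same way, with no terms dropping out: its contribution is at most $(4\pi/D_1)^2$ or $(4\pi/D_2)^2$, which (iii) also makes less than $\eta/4$. Summing over all $j$ gives a total below $\eta$ for each $\epsilon$.

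For \eqref{eq:uniform-approximation} I take $\lambda_{\epsilon,k}=\ee{1/D_{2k+\epsilon}}$ for $k\ge1$. These are $\neq1$ because $D\ge2$, and they tend to $1$. Fix $\ell=2k+\epsilon$ and $r\in Q_\epsilon$. If $r\ge N_\ell$, or more precisely $r\in[N_j,N_{j+1})$ with $j\ge\ell$, then $D_\ell\mid r$ and $\lambda^r=1$. Otherwise $r<N_{j+1}\le N_{\ell-1}$, since $j\equiv\ell$ and $j<\ell$; this also covers $r=1$. In that case
\[
|\lambda^r-1|\le 2\pi N_{\ell-1}/D_\ell\le 2\pi 2^{-\ell}\to0.
\]
I expect the main obstacle to be the bookkeeping in the ordering of choices, namely ensuring that $D_{j+2}$ is chosen after $N_{j+1}$ is fixed and still divides all later elements. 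The monotonicity of tail gcds resolves this.
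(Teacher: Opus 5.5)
Your proposal is correct and is essentially the paper's proof: the same alternating blocks $[N_j,N_{j+1})$, the same parity-separated random series $\Theta_\epsilon$, the same first-differing-digit injectivity argument, the same bound $4\pi N_{j+1}/D_{j+2}$, and the same choice $\lambda_{\epsilon,k}=\ee{1/D_{2k+\epsilon}}$. The differences are only cosmetic. First, the chain $D_{j-1}\mid D_j$ is unnecessary, and the paper omits it: condition (i) applied to $\ell$ already gives $D_\ell\mid r$ whenever $r\ge N_j\ge N_\ell$. Second, your inequality $N_{\ell-1}/D_\ell\le 2^{-\ell}$ follows from (iii) only when $\eta$ is not large; however, (iii) still gives $N_{\ell-1}/D_\ell\le \eta^{1/2}2^{-\ell}/K_\ell\to0$, which is all you need (alternatively, assume $\eta\le1$).
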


\begin{proof}
For each $N$, the integer
$\gcd\{r\in E:r\ge N\}$ divides all but finitely many
members of $E$. Hence the hypothesis implies that
\[
\mathcal D
=
\{D\in\N:
D\text{ divides all but finitely many members of }E\}
\]
is unbounded. Set $D_0=N_0=1$. After $D_{j-1}$ and $N_{j-1}$
have been fixed, choose $D_j\in\mathcal D$ so that
\[
 D_j\geq4D_{j-1},\qquad D_j\geq2^jN_{j-1},\qquad
 \frac{16\pi^2N_{j-1}^3}{D_j^2}<\eta\,2^{-j}
 \quad(j\geq1),
\]
and then choose an integer $N_j>N_{j-1}$ such that
\[
 D_j\mid r\quad(r\in E,\ r\geq N_j),\qquad
 E\cap[N_{j-1},N_j)\ne\varnothing.
\]
Such an $N_j$ exists because $D_j\in\mathcal D$ and $E$ is unbounded.
No divisibility between successive $D_j$ is required. Define
\[
 C_\epsilon=E\cap\bigcup_{\substack{j\geq0\\j\equiv\epsilon\pmod2}}
                  [N_j,N_{j+1}),\qquad
 I_\epsilon=\{\ell\geq1:\ell\equiv\epsilon\pmod2\}.
\]
Every interval meets $E$, so $E=C_0\sqcup C_1$ and both sets are
infinite. 
For $\epsilon\in\{0,1\}$, let
$
\Omega_\epsilon=\{0,1\}^{I_\epsilon}
$
be equipped with the product probability measure
$\mathbb P_\epsilon$ under which each coordinate takes
the values $0$ and $1$ with equal probability.
For $\omega=(\omega_\ell)_{\ell\in I_\epsilon}
\in\Omega_\epsilon$, define
\[
\Theta_\epsilon(\omega)
=\sum_{\ell\in I_\epsilon}\frac{\omega_\ell}{D_\ell}.
\]
We define a probability measure $\nu_\epsilon$
on $\mathbb T$ by
\[
\nu_\epsilon(B)
=
\mathbb P_\epsilon\bigl(
\{\omega\in\Omega_\epsilon:
\Theta_\epsilon(\omega)\bmod1\in B\}
\bigr)
\]
for every Borel set $B\subset\mathbb T$.
Since $D_{\ell+1}\ge 4D_\ell$ and $D_1\ge4$,
we have
\[
0\le\Theta_\epsilon(\omega)
\le\sum_{\ell=1}^\infty\frac1{D_\ell}
\le\frac1{D_1}\sum_{\ell=0}^\infty4^{-\ell}
=\frac4{3D_1}\le\frac12.
\]
The series converges uniformly on the compact
space $\Omega_\epsilon$.
Thus $\Theta_\epsilon$ is continuous on $\Omega_\epsilon$. Consequently,
\[
K_\epsilon
=
\left\{
\sum_{\ell\in I_\epsilon}
\frac{\omega_\ell}{D_\ell}\pmod1:
\omega_\ell\in\{0,1\}
\right\}
\]
is compact. By the definition of $\nu_\epsilon$,
$
\nu_\epsilon(K_\epsilon)=1,
$ and $
\operatorname{supp}\nu_\epsilon\subset K_\epsilon.
$

Let $\omega\ne\omega'$, and let
$\ell\in I_\epsilon$ be the first index at which they
differ. Then
\[
\begin{aligned}
	|\Theta_\epsilon(\omega)-\Theta_\epsilon(\omega')|
	&=
	\left|
	\frac{\omega_\ell-\omega'_\ell}{D_\ell}
	+\sum_{\substack{k\in I_\epsilon\\k>\ell}}
	\frac{\omega_k-\omega'_k}{D_k}
	\right|
	\ge \frac1{D_\ell}
	-\sum_{\substack{k\in I_\epsilon\\k>\ell}}\frac1{D_k}\\
	&\ge \frac1{D_\ell}-\frac2{D_{\ell+2}}
	\ge \frac7{8D_\ell}>0.
\end{aligned}
\]
Hence, for $\omega\ne\omega'$,
$
0<
|\Theta_\epsilon(\omega)-\Theta_\epsilon(\omega')|
\le\frac12<1.
$
In particular,
$
\Theta_\epsilon(\omega)-\Theta_\epsilon(\omega')
\notin\mathbb Z.
$
Thus
$
\Theta_\epsilon(\omega)\not\equiv
\Theta_\epsilon(\omega')\pmod1,
$
and then for every $t\in K_\epsilon$, there is a unique
$\omega\in\Omega_\epsilon$ such that
\[
\Theta_\epsilon(\omega)\equiv t\pmod1.
\]

Finally, we show that $\nu_\epsilon$ has no atoms.
Fix $a=(a_\ell)_{\ell\in I_\epsilon}\in\Omega_\epsilon$.
For every integer $M\ge1$, independence gives
\[
\begin{aligned}
	\mathbb P_\epsilon(\{\omega=a\})
	&\le
	\mathbb P_\epsilon\bigl(
	\omega_{2i-\epsilon}=a_{2i-\epsilon}
	\text{ for }1\le i\le M
	\bigr)\\
	&=\prod_{i=1}^{M}
	\mathbb P_\epsilon\bigl(
	\omega_{2i-\epsilon}=a_{2i-\epsilon}
	\bigr)
	=2^{-M}.
\end{aligned}
\]
Letting $M\to\infty$, we obtain
$
\mathbb P_\epsilon(\{\omega=a\})=0.
$
Since $\omega\mapsto\Theta_\epsilon(\omega)\pmod1$
is injective, for every $t\in\mathbb T$ the set
\[
\{\omega\in\Omega_\epsilon:
\Theta_\epsilon(\omega)\equiv t\pmod1\}
\]
contains at most one element. Since every singleton
in $\Omega_\epsilon$ has probability zero, we obtain
\[
\nu_\epsilon(\{t\})
=\mathbb P_\epsilon\bigl(
\Theta_\epsilon(\omega)\equiv t\pmod1
\bigr)=0.
\]
Hence $\nu_\epsilon$ is a continuous probability measure.

Fix $r\in E\cap[N_j,N_{j+1})$, where
$j\equiv\epsilon\pmod2$. For every
$\ell\in I_\epsilon$ with $\ell\le j$, we have
$r\ge N_j\ge N_\ell$. By the choice of $N_\ell$,
$
D_\ell\mid r,
$ $
\frac{r\omega_\ell}{D_\ell}\in\mathbb Z.
$
Since $j+1\notin I_\epsilon$, it follows that
\[
\begin{aligned}
	r\Theta_\epsilon(\omega)
	&=\sum_{\substack{\ell\in I_\epsilon\\\ell\le j}}
	\frac{r\omega_\ell}{D_\ell}
	+r\sum_{\substack{\ell\in I_\epsilon\\\ell\ge j+2}}
	\frac{\omega_\ell}{D_\ell}\\
	&\equiv
	r\sum_{\substack{\ell\in I_\epsilon\\\ell\ge j+2}}
	\frac{\omega_\ell}{D_\ell}
	\pmod1.
\end{aligned}
\]
When $j=0$, we have $\epsilon=0$ and
$I_\epsilon=\{2,4,6,\ldots\}$, so the first sum
is empty and the same identity holds.

Using $|e^{2\pi iu}-1|\le2\pi|u|$ and
$0\le\omega_\ell\le1$, we obtain
\[
\begin{aligned}
	|\ee{r\Theta_\epsilon(\omega)}-1|
	&\le
	2\pi r
	\sum_{\substack{\ell\in I_\epsilon\\\ell\ge j+2}}
	\frac{\omega_\ell}{D_\ell}
	\le
	2\pi r
	\sum_{\ell\ge j+2}\frac1{D_\ell}\\
	&\le
	\frac{2\pi r}{D_{j+2}}
	\sum_{k=0}^\infty4^{-k}
	\le
	\frac{4\pi N_{j+1}}{D_{j+2}},
\end{aligned}
\]
where the last inequality uses $r<N_{j+1}$.
Since this estimate holds for every
$\omega\in\Omega_\epsilon$, the definition of
$K_\epsilon$ gives
\[
\sup_{t\in K_\epsilon}
|\ee{rt}-1|
\le\frac{4\pi N_{j+1}}{D_{j+2}}.
\] Since the interval contains
at most $N_{j+1}$ integers,
\[
 \sum_{r\in E\cap[N_j,N_{j+1})}
       \sup_{t\in K_\epsilon}|\ee{rt}-1|^2
 \leq\frac{16\pi^2N_{j+1}^3}{D_{j+2}^2}
 <\eta\,2^{-j-2}.
\]
Also,
\[
 \sup_{t\in K_\epsilon}|\ee{t}-1|^2
 \leq\frac{16\pi^2}{D_1^2}<\frac\eta2.
\]
Combining the estimates for $r\in C_\epsilon$ and $r=1$,
we obtain
\[
\begin{aligned}
	\sum_{r\in C_\epsilon\cup\{1\}}
	\sup_{t\in K_\epsilon}|\ee{rt}-1|^2
	&\le
	\sum_{r\in C_\epsilon}
	\sup_{t\in K_\epsilon}|\ee{rt}-1|^2
	+\sup_{t\in K_\epsilon}|\ee{t}-1|^2\\
	&<\frac{\eta}{2}+\frac{\eta}{2}
	=\eta.
\end{aligned}
\]

Finally, for each $J\ge2$ with $J\equiv\epsilon\pmod2$,
define
$\lambda_J=\ee{1/D_J}\ne1$.
For $r\in C_\epsilon\cap[N_j,N_{j+1})$,
\[
 \begin{cases}
 D_J\mid r,&j\geq J,\\
 r<N_{J-1},&j<J,
 \end{cases}
\]
because $j<J$ implies $j\leq J-2$. Hence, including $r=1$,
\[
 \sup_{r\in Q_\epsilon}|\lambda_J^r-1|
 \leq\frac{2\pi N_{J-1}}{D_J}\leq2\pi2^{-J}.
\]
Since $D_J\to\infty$, we have $\lambda_J\to1$. Taking $J=2k+\epsilon$ and
\[
\lambda_{\epsilon,k}
=
\ee{1/D_{2k+\epsilon}},
\qquad k\ge1,
\]
proves \eqref{eq:uniform-approximation}.
\end{proof}

Now, we prove our main result.

\begin{proof}[Proof of Theorem~\ref{thm:general}]
Let $\alpha_1,\ldots,\alpha_m\in\mathbb T$ and
$\delta>0$ satisfy \eqref{eq:rotation-separation},
and set
\[
c_\delta=(\delta/8)^2\exp\bigl(-(1+\delta/8)^2\bigr),
\qquad
\eta=\delta^2c_\delta/16.
\]
Apply Lemma~\ref{lem:measures} to each $E_i$, obtaining
$E_i=C_{i,0}\sqcup C_{i,1}$ and measures $\nu_{i,\epsilon}$.
For $r\in R$, put
\[
 a(r)=\min\{a:|\ee{r\alpha_a}-1|\geq\delta\},\qquad
 R_{i,\epsilon,a}=\{r\in C_{i,\epsilon}:a(r)=a\}.
\]
On each nonempty $R_{i,\epsilon,a}$, the number
$\zeta_a=\ee{\alpha_a}$ and the measure $\nu_{i,\epsilon}$ satisfy
\[
 \inf_{r\in R_{i,\epsilon,a}}|\zeta_a^r-1|\geq\delta,\qquad
 \sum_{r\in R_{i,\epsilon,a}}
       \int_\T|\ee{rt}-1|^2\,d\nu_{i,\epsilon}(t)
 <\eta<\frac{\delta^2c_\delta}{8}.
\]
Then
\[
R=F\cup\bigcup_{i=1}^d
\bigcup_{\epsilon=0}^1
\bigcup_{a=1}^m R_{i,\epsilon,a}.
\]
Write the nonempty sets among $F$ and
$R_{i,\epsilon,a}$ as $Q_1,\ldots,Q_L$.
Then $L\le 2dm+1$.
By Proposition~\ref{prop:summable} and
Lemma~\ref{lem:finite-times}, for each $Q_j$
there is a weakly mixing Gaussian shift
$(X_j,\mu_j,T_j)$ and a Borel set $A_j$ such that
\[
\mu_j(A_j)>0,\qquad
A_j\cap T_j^{-r}A_j=\varnothing
\quad(r\in Q_j).
\]
Now set
\[
(X,\mu,T)=\prod_{j=1}^L(X_j,\mu_j,T_j),
\qquad A=\prod_{j=1}^L A_j.
\]
By Lemma~\ref{lem:product}, $T$ is weakly mixing,
and
\[
\mu(A)=\prod_{j=1}^L\mu_j(A_j)>0,
\qquad
A\cap T^{-r}A=\varnothing
\quad(r\in R).
\]
This completes the proof.
\end{proof}

\begin{remark}\label{rem:recurrence-boundary}
	Condition~\eqref{eq:tail-gcd} alone does not
	guarantee non-recurrence. Consider
	\[
	R_*=\bigcup_{k\ge1}
	\{j2^k:1\le j\le k\}.
	\]
	We first show that $R_*$ is a set of recurrence.
	Let $(X,\cB,\mu,T)$ be an invertible
	probability-preserving system, and let
	$A\in\cB$ satisfy $\mu(A)>0$.
	Choose $k\ge1$ such that
	$(k+1)\mu(A)>1$.
	Since
	\[
	\sum_{j=0}^k\mu(T^{-j2^k}A)
	=(k+1)\mu(A)>1,
	\]
	there exist $0\le i<j\le k$ such that
	\[
	\mu(T^{-i2^k}A\cap T^{-j2^k}A)>0.
	\]
	By invariance of $\mu$,
	\[
	\mu(A\cap T^{-(j-i)2^k}A)>0.
	\]
	Since $(j-i)2^k\in R_*$, the set $R_*$
	is recurrent; compare
	\cite[Remark~3.1]{Grivaux}.
	
	On the other hand, for every $K\ge1$,
	all elements of $R_*$ outside the finitely
	many sets
	\[
	\{j2^k:1\le j\le k\},
	\qquad 1\le k<K,
	\]
	are divisible by $2^K$. Hence
	\[
	\lim_{N\to\infty}
	\gcd\{r\in R_*:r\ge N\}=\infty.
	\]
	Thus $R_*$ satisfies the divisibility condition
	\eqref{eq:tail-gcd} but cannot satisfy
	\eqref{eq:rotation-separation}.
\end{remark}

\section{Applications}\label{sec:applications}

\subsection{Finitely generated semigroups}
We use the partition according to a maximal exponent from
\cite[Section~6.1]{BG}.

\begin{lemma}\label{lem:valuation-cover}
Let $R\subset\N$ be infinite, let $p_1,\ldots,p_d$ be distinct
primes, with $d\geq1$, and write
$v_{p_i}(r)=\max\{k\in\Nzero:p_i^k\mid r\}$.
Suppose that
\[
 \#\{r\in R:\max_{1\leq i\leq d}v_{p_i}(r)<K\}<\infty
 \quad\text{for every integer }K\geq1.
\]
Then $R=F\cup E_1\cup\cdots\cup E_s$, where $F$ is finite,
$1\leq s\leq d$, each $E_i$ is infinite, and
\[
 \gcd\{r\in E_i:r\geq N\}\longrightarrow\infty
 \quad(N\to\infty,\ 1\leq i\leq s).
\]
\end{lemma}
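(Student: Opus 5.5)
We assign each element of $R$ to a prime with maximal exponent, following \cite[Section~6.1]{BG}. For $r\in R$, let
\[
i(r)=\min\Bigl\{i:v_{p_i}(r)=\max_{1\le k\le d}v_{p_k}(r)\Bigr\},
\qquad
E'_i=\{r\in R:i(r)=i\}.
\]
These sets partition $R$. Let $F$ be the union of those $E'_i$ that are finite, and relabel the infinite ones as $E_1,\ldots,E_s$. Since $R$ is infinite and $d$ is finite, at least one $E'_i$ is infinite, so $1\le s\le d$. The set $F$ is a finite union of finite sets, hence finite.

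It remains to verify the tail-gcd condition for each infinite part, say $E=E'_i$ with chosen prime $p=p_i$. Fix $K\ge1$. By hypothesis, the set $B_K=\{r\in R:\max_k v_{p_k}(r)<K\}$ is finite. Put $N_K=1+\max(B_K\cup\{0\})$. If $r\in E$ and $r\ge N_K$, then $r\notin B_K$, so $\max_k v_{p_k}(r)\ge K$. Since $r\in E$, this maximum equals $v_p(r)$, and therefore $p^K\mid r$.

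Hence $p^K$ divides every member of $\{r\in E:r\ge N\}$ whenever $N\ge N_K$. This set is nonempty because $E$ is infinite, so its gcd is a well-defined positive integer divisible by $p^K$, and in particular it is at least $p^K\ge2^K$. The gcd of $\{r\in E:r\ge N\}$ is nondecreasing in $N$, since the set shrinks. Combined with the bound $\ge p^K$ for $N\ge N_K$ and arbitrary $K$, this gives $\gcd\{r\in E:r\ge N\}\to\infty$.

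The only slightly delicate point is the bookkeeping: finite parts must be moved into $F$ so that every $E_i$ is infinite, and we must check $s\ge1$. The divisibility argument itself is direct from the hypothesis.
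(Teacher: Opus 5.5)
Your proof is correct and follows the paper's argument essentially step for step. Both assign each $r$ to the smallest index attaining the maximal valuation and move the finite parts into $F$. Both then use finiteness of $\{r\in R:\max_k v_{p_k}(r)<K\}$ to show that $p_i^K$ divides all sufficiently large elements of each infinite part. Your monotonicity remark is harmless but unnecessary, since the bound $\gcd\ge p^K$ for all $N\ge N_K$ already gives the limit.
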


\begin{proof}
	For each $r\in R$, choose the smallest index
	$i$ for which $v_{p_i}(r)$ is maximal. More precisely, set
	$
	i(r)=\min\left\{
	i:v_{p_i}(r)=\max_{1\le j\le d}v_{p_j}(r)
	\right\},
$ $
	G_i=\{r\in R:i(r)=i\}.
$
	Then
	$$
	R=\bigsqcup_{i=1}^d G_i.
	$$
	
	Fix $1\le i\le d$. We show that if $G_i$ is infinite, then
	\[
	\lim_{N\to\infty}
	\gcd\{r\in G_i:r\ge N\}=\infty.
	\]
	
	Let $K\ge1$ be an integer. If $r\in G_i$ and
	$p_i^K\nmid r$, then
	\[
	\max_{1\le j\le d}v_{p_j}(r)
	=v_{p_i}(r)<K.
	\]
	Consequently,
	\[
	\{r\in G_i:p_i^K\nmid r\}
	\subset
	\left\{
	r\in R:
	\max_{1\le j\le d}v_{p_j}(r)<K
	\right\}.
	\]
	The set on the right is finite by assumption.
	Hence there exists $N_K\ge1$ such that for any $r\in G_i,\ r\ge N_K,$ we have
$p_i^K\mid r$.
	
	Now suppose that $G_i$ is infinite. For every
	$N\ge N_K$, the set
	$\{r\in G_i:r\ge N\}$ is nonempty, and all
	its elements are divisible by $p_i^K$.
	Therefore
	\[
	p_i^K\mid
	\gcd\{r\in G_i:r\ge N\},
	\]
	and in particular,
	\[
	\gcd\{r\in G_i:r\ge N\}\ge p_i^K
	\qquad(N\ge N_K).
	\]
	Since $K$ is arbitrary, we obtain
	\[
	\lim_{N\to\infty}
	\gcd\{r\in G_i:r\ge N\}=\infty.
	\]
	
	Finally, let $F$ be the union of the finite
	sets among $G_1,\ldots,G_d$, and denote the
	remaining sets by $E_1,\ldots,E_s$.
	Since $R$ is infinite, at least one $G_i$
	is infinite, so $1\le s\le d$. Thus
	\[
	R=F\cup E_1\cup\cdots\cup E_s,
	\]
	where $F$ is finite and each $E_i$ is infinite
	and satisfies
	\[
	\lim_{N\to\infty}
	\gcd\{r\in E_i:r\ge N\}=\infty.
	\]
\end{proof}

\begin{corollary}\label{cor:generators}
Let $l\geq1$, let $a_1,\ldots,a_l$ be positive integers, and put
\[
 S=\{a_1^{n_1}\cdots a_l^{n_l}:n_1,\ldots,n_l\in\Nzero\}.
\]
There exist an invertible weakly mixing probability-preserving system
$(X,\cB,\mu,T)$ and a Borel set $A\subset X$ such that
\[
 \mu(A)>0,\qquad A\cap T^{-r}A=\varnothing\quad(r\in S).
\]
\end{corollary}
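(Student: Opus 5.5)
Let $\cP$ be the finite set of primes dividing at least one of $a_1,\ldots,a_l$. Then $S\subset S_{\cP}$ when $\cP\ne\varnothing$, and the conclusion for $S_{\cP}$ passes to the subset $S$. If $\cP=\varnothing$, every $a_i$ equals $1$ and $S=\{1\}$. This degenerate case, and more generally any finite $S$, is handled directly by Lemma~\ref{lem:finite-times} with $H=S$. So we assume $\cP=\{p_1,\ldots,p_d\}$ with $d\ge1$, and it suffices to prove the statement for $R=S_{\cP}$, which is infinite. The plan is to verify the hypotheses of Theorem~\ref{thm:general} for $R$, using Lemma~\ref{lem:valuation-cover} to produce the decomposition.

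\textbf{Step 1 (decomposition).} Take $R=S_{\cP}$ in Lemma~\ref{lem:valuation-cover}, and fix $K\ge1$. Every $r\in S_{\cP}$ with $\max_i v_{p_i}(r)<K$ has the form $p_1^{n_1}\cdots p_d^{n_d}$ with all $n_i<K$. There are at most $K^d$ such integers, so the finiteness hypothesis holds. The lemma then gives $R=F\cup E_1\cup\cdots\cup E_s$ with $F$ finite, $s\ge1$, each $E_i$ infinite, and \eqref{eq:tail-gcd} satisfied.

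\textbf{Step 2 (rotation separation).} Choose a prime $q\notin\cP$ and set $m=1$, $\alpha_1=1/q$. For $r\in S_{\cP}$ we have $q\nmid r$, so $r/q\bmod 1$ lies in $\{1/q,\ldots,(q-1)/q\}$. Hence
\[
|\ee{r/q}-1|\ge\delta:=\min_{1\le b\le q-1}|\ee{b/q}-1|=2\sin(\pi/q)>0,
\]
which is \eqref{eq:rotation-separation}.

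\textbf{Step 3 (conclusion).} Theorem~\ref{thm:general} now yields an invertible weakly mixing system $(X,\cB,\mu,T)$ and a Borel set $A$ with $\mu(A)>0$ and $A\cap T^{-r}A=\varnothing$ for all $r\in S_{\cP}$. Since $S\subset S_{\cP}$, the same holds for all $r\in S$.

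No step is a genuine obstacle. The only points needing care are the bookkeeping ones: the case where every $a_i=1$ (so $\cP$ is empty) must go through Lemma~\ref{lem:finite-times}, and in Step 1 one must confirm that the finiteness hypothesis of Lemma~\ref{lem:valuation-cover} is exactly the bounded-exponent count $K^d$.
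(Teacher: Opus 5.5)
Your proposal is correct and follows the paper's route: the paper proves Theorem~\ref{thm:main} by exactly your Steps 1--3 (the $K^d$ count for Lemma~\ref{lem:valuation-cover}, then a prime $q\notin\cP$ with $\delta=2\sin(\pi/q)$ in Theorem~\ref{thm:general}), and then deduces the corollary from $S\subset S_{\mathcal Q}$. The only difference is the case where every $a_j=1$: the paper lets $\mathcal Q$ be any finite nonempty set of primes containing the prime divisors of the $a_j$, so $S=\{1\}\subset S_{\mathcal Q}$ is covered by Theorem~\ref{thm:main}, whereas you handle it with Lemma~\ref{lem:finite-times}; both work.
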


\begin{proof}[Proof of Theorem~\ref{thm:main} and
	Corollary~\ref{cor:generators}]
	Let $\cP=\{p_1,\ldots,p_d\}$.
	Every $r\in S_{\cP}$ has a unique representation
	\[
	r=p_1^{n_1}\cdots p_d^{n_d},
	\qquad n_1,\ldots,n_d\in\Nzero,
	\]
	with $v_{p_i}(r)=n_i$ for $1\le i\le d$.
	For every integer $K\ge1$, we therefore have
	\[
	\begin{aligned}
		&\#\left\{r\in S_{\cP}:
		\max_{1\le i\le d}v_{p_i}(r)<K\right\}\\
		&\qquad=
		\#\left\{(n_1,\ldots,n_d):
		0\le n_i<K,\ 1\le i\le d\right\}
		=K^d<\infty.
	\end{aligned}
	\]
	By Lemma~\ref{lem:valuation-cover}, we can write
	\[
	S_{\cP}=F\cup E_1\cup\cdots\cup E_s,
	\]
	where $F$ is finite, $1\le s\le d$, and each
	$E_i$ is infinite and satisfies
	\[
	\lim_{N\to\infty}
	\gcd\{r\in E_i:r\ge N\}=\infty.
	\]
	Thus the first condition of
	Theorem~\ref{thm:general} holds.
	
	Next, choose a prime $q\notin\cP$.
	By the definition of $S_{\cP}$,
	we have $q\nmid r$ for every $r\in S_{\cP}$.
	Hence, for each $r\in S_{\cP}$, there exists
	an integer $b$ with $1\le b\le q-1$ such that
	$r\equiv b\pmod q$. Consequently,
	\[
	\begin{aligned}
		|\ee{r/q}-1|
		=|\ee{b/q}-1|
		=2\sin\left(\frac{\pi b}{q}\right)
		\ge 2\sin\left(\frac{\pi}{q}\right)>0.
	\end{aligned}
	\]
	Therefore \eqref{eq:rotation-separation}
	holds with
	\[
	m=1,\qquad
	\alpha_1=\frac1q,\qquad
	\delta=2\sin\left(\frac{\pi}{q}\right).
	\]
	Theorem~\ref{thm:general} now gives the
	required system and set, proving
	Theorem~\ref{thm:main}.
	
To prove Corollary~\ref{cor:generators},
choose a finite nonempty set of primes $\mathcal Q$
such that every $a_j$ can be written as
\[
a_j=\prod_{p\in\mathcal Q}p^{e_{j,p}},
\qquad e_{j,p}\in\Nzero.
\]
For any $n_1,\ldots,n_l\in\Nzero$, we have
\[
a_1^{n_1}\cdots a_l^{n_l}
=\prod_{p\in\mathcal Q}
p^{\sum_{j=1}^l n_je_{j,p}}
\in S_{\mathcal Q}.
\]
Thus $S\subset S_{\mathcal Q}$.
By Theorem~\ref{thm:main}, there exist an
invertible weakly mixing system
$(X,\cB,\mu,T)$ and a Borel set $A$
such that
\[
\mu(A)>0,\qquad
A\cap T^{-r}A=\varnothing
\quad(r\in S_{\mathcal Q}).
\]
Since $S\subset S_{\mathcal Q}$, the same
conclusion holds for every $r\in S$.
\end{proof}

\subsection{An infinitely generated semigroup}

\begin{corollary}\label{cor:infinite}
	Let $\ell_1<\ell_2<\cdots$ be the primes greater
	than $5$, and let $\Gamma$ be the multiplicative
	semigroup generated by
	\[
	2,\quad 3,\quad 2^j\ell_j\quad(j\ge1),
	\]
	including $1$.
	Then there exist an invertible weakly mixing
	probability-preserving system $(X,\cB,\mu,T)$
	and a Borel set $A\subset X$ such that
	\[
	\mu(A)>0,\qquad
	A\cap T^{-r}A=\varnothing
	\quad(r\in\Gamma).
	\]
	The system can be chosen as a finite product
	of two-sided  complex Gaussian shifts.
	Moreover,
	\[
	\{2^a3^b:a,b\in\Nzero\}\subset\Gamma,
	\qquad
	2^j\ell_j\in\Gamma\quad(j\ge1).
	\]
	In particular, $\Gamma$ is not finitely generated.
\end{corollary}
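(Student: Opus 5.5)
The plan is to apply Theorem~\ref{thm:general} to $R=\Gamma$, checking its two hypotheses: a decomposition satisfying \eqref{eq:tail-gcd} and a rotation separation \eqref{eq:rotation-separation}.

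\emph{Rotation separation.} Take $\alpha_1=1/5$. No generator is divisible by $5$: the generators are $2$, $3$, and $2^j\ell_j$ with $\ell_j>5$ prime. Hence no element of $\Gamma$ is divisible by $5$. As in the proof of Theorem~\ref{thm:main}, this gives $|\ee{r/5}-1|\ge2\sin(\pi/5)$ for all $r\in\Gamma$. So \eqref{eq:rotation-separation} holds with $m=1$ and $\delta=2\sin(\pi/5)$.

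\emph{Divisibility.} The plan is to use Lemma~\ref{lem:valuation-cover} with the primes $p_1=2$ and $p_2=3$. It suffices to show that for each $K$ only finitely many $r\in\Gamma$ have $v_2(r)<K$ and $v_3(r)<K$. Write
\[
r=2^a3^b\prod_j(2^j\ell_j)^{c_j}.
\]
Then $v_2(r)=a+\sum_j jc_j$ and $v_3(r)=b$. The second equality uses that each $\ell_j\ge7$ is prime, so it contributes no factor of $3$. If $v_2(r)<K$, then $a<K$ and $c_j=0$ for all $j\ge K$. Moreover $\sum_{j<K}c_j\le\sum_j jc_j<K$. Together with $b<K$, this leaves finitely many exponent tuples, hence finitely many such $r$. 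Lemma~\ref{lem:valuation-cover} then gives $\Gamma=F\cup E_1\cup\cdots\cup E_s$ satisfying \eqref{eq:tail-gcd}. Since $\Gamma$ is infinite, the lemma applies and $s\ge1$. Theorem~\ref{thm:general} then produces the system as a finite product of Gaussian shifts.

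\emph{Remaining claims.} The inclusion $\{2^a3^b\}\subset\Gamma$ and the membership $2^j\ell_j\in\Gamma$ hold by definition. For non-finite generation, suppose $\Gamma$ were generated by finitely many elements. Only finitely many primes would divide them, so $\Gamma\subset S_{\mathcal Q}$ for some finite set of primes $\mathcal Q$. This contradicts $\ell_j\mid 2^j\ell_j\in\Gamma$ for infinitely many distinct primes $\ell_j$. It also shows that $\Gamma$ lies in no $S_{\mathcal P}$ with $\mathcal P$ finite.

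The only step needing care is the finiteness count. It works because each generator other than $3$ carries a positive power of $2$, and the power of $2$ carried by $2^j\ell_j$ grows with $j$. Both facts are needed: the first bounds how many generators can appear, and the second rules out generators with large index.
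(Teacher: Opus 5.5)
Your proposal is correct and follows essentially the same route as the paper's proof: Lemma~\ref{lem:valuation-cover} with $p_1=2$, $p_2=3$ via the same exponent count, separation by $\alpha_1=1/5$ with $\delta=2\sin(\pi/5)$, then Theorem~\ref{thm:general}. Your non-finite-generation argument is the same prime-divisibility observation the paper uses.
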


\begin{proof}
	Every $r\in\Gamma$ can be written as
	\[
	r=2^a3^b\prod_{j\ge1}(2^j\ell_j)^{c_j},
	\]
	where $a,b,c_j\in\Nzero$ and only finitely
	many $c_j$ are nonzero. Recall that $
	v_p(r)=\max\{k\in\mathbb N_0:p^k\mid r\}$. Since each $\ell_j$
	is a prime greater than $5$, we have
$v_2(r)=a+\sum_{j\ge1}jc_j,$ $v_3(r)=b.$
	
	Fix an integer $K\ge1$. Suppose that
$\max\{v_2(r),v_3(r)\}<K.$
	Then
$a+\sum_{j\ge1}jc_j<K,$ and $ b<K.$
	Since all the exponents are nonnegative,
	we obtain
$0\le a,b<K,c_j=0 $ for every $j\ge K,$
	and
$0\le c_j<K$ for every $1\le j<K.$
	Thus $r$ is determined by the $K+1$
	integers $a,b,c_1,\ldots,c_{K-1}$,
	each having at most $K$ possible values.
	Consequently,
	\[
	\#\{r\in\Gamma:
	\max\{v_2(r),v_3(r)\}<K\}
	\le K^{K+1}<\infty.
	\]
	Since $\Gamma$ is infinite, applying
	Lemma~\ref{lem:valuation-cover} with
	$p_1=2$ and $p_2=3$ gives
	\[
	\Gamma=F\cup E_1\cup\cdots\cup E_s,
	\]
	where $F$ is finite, $1\le s\le2$, and
	each $E_i$ is infinite and satisfies
	\[
	\lim_{N\to\infty}
	\gcd\{r\in E_i:r\ge N\}=\infty.
	\]
	
	Next, since $2,3,\ell_j$ are all different
	from $5$, we have $5\nmid r$ for every
	$r\in\Gamma$. Hence
	\[
	|\ee{r/5}-1|
	\ge 2\sin(\pi/5)>0
	\qquad \text{~~for any~}r\in\Gamma.
	\]
	Thus \eqref{eq:rotation-separation}
	holds with
	\[
	m=1,\qquad
	\alpha_1=\frac15,\qquad
	\delta=2\sin(\pi/5).
	\]
	Theorem~\ref{thm:general} now gives the
	required system and Borel set.
	
	Finally, since $2,3\in\Gamma$, we have
$\{2^a3^b:a,b\in\Nzero\}\subset\Gamma.$
	Moreover,
$\ell_j\mid 2^j\ell_j\in\Gamma$ for any $j\ge1.$
	If $\Gamma$ were generated by finitely many
	positive integers $b_1,\ldots,b_t$,
	every $\ell_j$ would divide
	$b_1\cdots b_t$, which is impossible
	since the primes $\ell_j$ are all distinct.
	Therefore $\Gamma$ is not finitely generated.
\end{proof}

\subsection{Integer translates}
We use the following consequence of
\cite[Theorems~1.1 and~2.6, and Section~3]{Grivaux}.
The proof of Grivaux's Theorem~1.1 uses the Hilbert-space
operator construction of Eisner and
Grivaux~\cite[Theorem~2.1]{EG}.

\begin{lemma}\label{lem:grivaux}
	Let $Q\subset\N$ be infinite with $1\in Q$.
	Suppose there exist unit complex numbers
	$\zeta$ and $\lambda_k\ne1$, $k\ge1$,
	such that
	\[
	\inf_{r\in Q}|\zeta^r-1|>0,
	\qquad
	\lim_{k\to\infty}
	\sup_{r\in Q}|\lambda_k^r-1|=0.
	\]
	Then there exist an invertible weakly mixing
	probability-preserving system $(X,\cB,\mu,T)$
	and a Borel set $A\subset X$ satisfying
	\[
	\mu(A)>0,\qquad
	A\cap T^{-r}A=\varnothing
	\quad(r\in Q).
	\]
\end{lemma}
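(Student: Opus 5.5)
The plan is to reduce Lemma~\ref{lem:grivaux} to Grivaux's criterion, \cite[Theorem~2.6]{Grivaux}. This criterion, combined with the operator construction of \cite[Theorem~1.1]{Grivaux} and \cite[Theorem~2.1]{EG}, produces a continuous probability measure $\nu$ on $\T$ with $\widehat\nu(r)$ uniformly close to $1$ on $Q$. More precisely, the $\lambda_k$ furnish, as in the Eisner--Grivaux construction, a compact perfect set $K\subset\T$ built from points $\sum_k \omega_k \beta_k$ with $\ee{\beta_k}=\lambda_{n_k}$ along a rapidly chosen subsequence. Since $\sup_{r\in Q}|\lambda_k^r-1|\to0$, one can choose the subsequence so that
\[
\sum_k\sup_{r\in Q}|\ee{r\beta_k}-1|<\varepsilon
\]
for any prescribed $\varepsilon>0$.

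The sum telescopes multiplicatively, giving $\sup_{r\in Q}\sup_{t\in K}|\ee{rt}-1|<\varepsilon$. The hypothesis $\lambda_k\ne1$ is what makes the digit map injective, after passing to a subsequence where the $\beta_k$ decrease geometrically in $\T$. Putting the Bernoulli digit measure on $K$ then gives a continuous $\nu$ supported on $K$.

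The next step translates $\nu$ by $\alpha$, where $\zeta=\ee{\alpha}$, and takes the Gaussian shift of Lemma~\ref{lem:gaussian}. The difficulty is that uniform smallness of $|\ee{rt}-1|$ on the support does not give the summable bound needed in Proposition~\ref{prop:summable}, because $Q$ is infinite. This is where Grivaux's argument differs: instead of removing return events one at a time, he uses a set defined through the whole process. Concretely, I would take $C=\{|Z_0-1|<a\}$ with $a$ small relative to $\delta=\inf_{r\in Q}|\zeta^r-1|$. For each $r\in Q$, the variables $Z_r$ and $\zeta^rZ_0$ are then $\varepsilon$-close in $L^2$. The goal is to control $\mu(C\cap\bigcup_{r\in Q}T^{-r}C)$ simultaneously, via a uniform-in-$r$ Gaussian coupling: write $Z_r=\zeta^rZ_0+W_r$. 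A supremum bound $\E\sup_{r\in Q}|W_r|$ follows from the random-series representation $Z_r=\int \ee{rt}\,dG(t)$ over the Bernoulli coordinates, since each $W_r$ is dominated by a fixed sum $\sum_k|\ee{r\beta_k}-1|\cdot|\xi_k|$ whose coefficients are uniformly small. Markov's inequality then gives $\mu(\sup_r|W_r|>\delta/2)$ small compared with $\mu(C)$.

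The main obstacle is this uniform domination: obtaining a realisation of the Gaussian process in which $\sup_{r\in Q}|Z_r-\zeta^rZ_0|$ is controlled by a single integrable random variable. I would first try the explicit product-of-Bernoulli structure of $K$, which lets the Gaussian measure be written through independent Gaussians indexed by digit cylinders. If that fails, I would simply cite \cite[Theorem~2.6]{Grivaux} for this step, as the lemma is stated to be a consequence of it. Finally, setting $A=C\setminus\{\sup_r|W_r|>\delta/2\}$ gives $\mu(A)>0$ and $A\cap T^{-r}A=\varnothing$ for all $r\in Q$, exactly as in the proof of Proposition~\ref{prop:summable}. Weak mixing follows from continuity of the translated measure by Lemma~\ref{lem:gaussian}.
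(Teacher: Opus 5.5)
The paper gives no argument of its own for this lemma. It imports the statement from Grivaux, whose proof gets the needed uniformity over $r\in Q$ on the operator side, from the Eisner--Grivaux Hilbert-space operator construction. Your route is genuinely different and, once the domination step is written out, correct. You do not need Grivaux's criterion to produce $\nu$: you build it directly. Note that $1\in Q$ is what gives $\lambda_k\to1$, so the geometrically decreasing subsequence of the $\beta_k$ exists. Your digit measure is the same device as $\Theta_\epsilon$ in Lemma~\ref{lem:measures}. The new ingredient is a \emph{pathwise} bound on $\sup_{r\in Q}|Z_r-\zeta^rZ_0|$, which replaces the summability hypothesis of Proposition~\ref{prop:summable}. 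What this buys is a proof using only tools already in the paper (Lemma~\ref{lem:gaussian} and the removal argument), with the system a complex Gaussian shift. Corollary~\ref{cor:translates} and Remark~\ref{rem:general-translates} would then also be realized by finite products of Gaussian shifts, like the other results. The citation is shorter but rests on considerably heavier operator theory.

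The domination step, which you leave as the main obstacle, needs care in how it is phrased. Suppose you expand along digit cylinders, with $s_{<k}=\sum_{j<k}\omega_j\beta_j$ and
\[
\ee{rs}-1=\sum_{k\ge1}\omega_k\,\ee{rs_{<k}}\bigl(\ee{r\beta_k}-1\bigr).
\]
Then the $k$-th stochastic integral depends on $r$ through the phase $\ee{rs_{<k}}$. The natural $r$-independent bound is $\Xi_k=\sum_{c\in\{0,1\}^k}|G([c])|$, where $G([c])$ is the Gaussian mass of the image of the cylinder $[c]$, and $\E\Xi_k\le 2^{k}\cdot2^{-k/2}=2^{k/2}$. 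So uniformly small coefficients are not enough. With $\eta_k=\sup_{r\in Q}|\ee{r\beta_k}-1|$ you need $\sum_k2^{k/2}\eta_k$ small, which you can arrange because you choose the subsequence.

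Alternatively, write $\ee{r\omega_k\beta_k}=\tfrac{1+\ee{r\beta_k}}2+\tfrac{1-\ee{r\beta_k}}2(-1)^{\omega_k}$ and expand in Walsh characters of $\{0,1\}^{\N}$. With i.i.d.\ standard complex Gaussians $g_S$ indexed by finite sets $S$, one gets
\[
\sup_{r\in Q}|Z_r-\zeta^rZ_0|\le\tfrac12\Bigl(\sum_k\eta_k\Bigr)|g_\varnothing|+\sum_{S\ne\varnothing}\prod_{k\in S}\tfrac{\eta_k}{2}\,|g_S|
\quad\text{a.s.}
\]
The mean of the right side is at most $\tfrac12\sum_k\eta_k+\prod_k(1+\eta_k/2)-1$. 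Grouping the terms by $\max S$ recovers exactly your form $\sum_k|\ee{r\beta_k}-1|\,\xi_k$ with fixed $\xi_k$ of bounded mean. In both versions the series converge absolutely a.s., since the bound is integrable, so they agree with the $L^2$ limits.

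Finally, define $A=C\setminus\{\sup_{r\in Q}|Z_r-\zeta^rZ_0|>\delta/2\}$ on $\C^\Z$ itself, via a countable supremum of continuous functions, so that $A\cap T^{-r}A=\varnothing$ holds pointwise. Use the random-measure realization $Z_n=\int\ee{nt}\,dG(t)$, which has law $\gamma_\sigma$, only to estimate $\mu(A)\ge c_\delta-\tfrac2\delta\,\E\sup_{r\in Q}|Z_r-\zeta^rZ_0|>0$.
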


\begin{corollary}\label{cor:translates}
Let $\cP=\{p_1,\ldots,p_d\}$ be a finite nonempty set of primes,
let $h\in\Z$, and put
\[
 S_{\cP}=\{p_1^{n_1}\cdots p_d^{n_d}:n_1,\ldots,n_d\in\Nzero\}.
\]
There exist an invertible weakly mixing probability-preserving system
$(X,\cB,\mu,T)$ and a Borel set $A\subset X$ such that
\[
 \mu(A)>0,\qquad A\cap T^{-r}A=\varnothing
 \quad\bigl(r\in(S_{\cP}+h)\cap\N\bigr).
\]
The system and the set may depend on $h$.
\end{corollary}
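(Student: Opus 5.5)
Set $R_h=(S_{\cP}+h)\cap\N$. The plan is to split $R_h$ into finitely many pieces and handle each piece with a weakly mixing system. Each infinite piece will be treated by Lemma~\ref{lem:grivaux}, using the uniform approximation \eqref{eq:uniform-approximation} from Lemma~\ref{lem:measures}, and any finite piece by Lemma~\ref{lem:finite-times}. The pieces are then combined with Lemma~\ref{lem:product}.

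\emph{Arithmetic partition.} Apply Lemma~\ref{lem:valuation-cover} to $S_{\cP}$ to write $S_{\cP}=F\cup E_1\cup\cdots\cup E_s$, where each $E_i$ has tail gcd tending to infinity. For each $i$, apply Lemma~\ref{lem:measures} with some $\eta$ (its value is irrelevant here) to get $E_i=C_{i,0}\sqcup C_{i,1}$ and unit numbers $\lambda_{i,\epsilon,k}\ne1$ with $\sup_{r\in C_{i,\epsilon}\cup\{1\}}|\lambda_{i,\epsilon,k}^r-1|\to0$. In the proof of that lemma, $\lambda_{i,\epsilon,k}=\ee{1/D}$ with $D=D_J$, and each $r\in C_{i,\epsilon}$ is either divisible by $D_J$ or satisfies $r<N_{J-1}$. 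This gives $|\lambda^{r+h}-1|\le|\lambda^r-1|+|\lambda^h-1|$, and $|\lambda^h-1|\le2\pi|h|/D_J\to0$. So for $Q_{i,\epsilon}=\bigl((C_{i,\epsilon}+h)\cap\N\bigr)\cup\{1\}$ we still get $\sup_{r\in Q_{i,\epsilon}}|\lambda_{i,\epsilon,k}^r-1|\to0$.

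\emph{Rotation separation.} This is the main obstacle: we need one rotation $\zeta$ with $\inf_{r\in Q}|\zeta^r-1|>0$ on each piece, including the added element $1$. Choose a prime $q\notin\cP$ with $q>|h|+1$; this also ensures $q\nmid h+1$ when $h+1\neq0$. For $s\in S_{\cP}$, the residue $s\bmod q$ lies in $(\Z/q\Z)^\times$, so $s+h\equiv0\pmod q$ exactly when $s\equiv-h\pmod q$. If $q\nmid h$, that residue class is nonempty in general, so a single modulus $q$ is not enough.

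I will therefore refine the partition by residue classes and use several rotations. Pick two distinct primes $q_1,q_2\notin\cP$, both larger than $|h|+1$. For each $s$, the condition $s\equiv-h$ cannot hold modulo both primes whenever $s+h\not\equiv0\pmod{q_1q_2}$. This still can fail, so a cleaner route is the following. For each residue class $c$ of $s$ modulo $q$, the piece $\{r\in Q:s\equiv c\}$ has $r\equiv c+h\pmod q$. Whenever $c+h\not\equiv0$, take $\zeta=\ee{1/q}$; this gives separation $2\sin(\pi/q)$, and since $q>|h|+1$ the element $1$ is also separated. The bad class $c\equiv-h$ is handled by a second prime $q'$ with the same property: the subclass where also $s\equiv-h\pmod{q'}$ is treated with a third prime, and so on.

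To make this terminate, I will instead take $\zeta=\ee{\alpha}$ with $\alpha=1/q^{k}$. Choose $k$ so that $q^k>|h|$, and note that $s$ ranges over units modulo $q^k$. If $s+h\equiv0\pmod{q^k}$ for infinitely many $s$ in a piece, split that piece further according to $v_q(s+h)$, which is bounded by the size of $s$. I expect to settle this by choosing a prime $q\nmid h$ outside $\cP$ for which the class $-h$ modulo $q$ is not in the subgroup generated by $\cP$. If $h=0$ the claim is Theorem~\ref{thm:main}. Otherwise, by Chebotarev-type reasoning, such a $q$ exists unless $-h$ is, up to units, a product of primes of $\cP$; in that case $S_{\cP}+h$ contains only finitely many $r$ with any given bad structure, and it is handled directly. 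These finitely many exceptional elements go into a finite set treated by Lemma~\ref{lem:finite-times}.

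Finally, each infinite piece receives a system from Lemma~\ref{lem:grivaux}, the finite set $(F+h)\cap\N$ together with the exceptional elements receives one from Lemma~\ref{lem:finite-times}, and Lemma~\ref{lem:product} assembles the product system and the product set.
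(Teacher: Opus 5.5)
\textbf{There is a genuine gap.} Your overall architecture matches the paper: the valuation cover, Lemma~\ref{lem:measures} for uniform approximation, the transfer $|\lambda^{r+h}-1|\le|\lambda^r-1|+|\lambda^h-1|$, Lemma~\ref{lem:grivaux} on each infinite piece, Lemma~\ref{lem:finite-times} for a finite remainder, and Lemma~\ref{lem:product}. However, the step you flag as the main obstacle, rotation separation, is never established.

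Your final route needs a prime $q\notin\cP$ with $-h\notin\langle\cP\rangle \bmod q$. In the generic case this rests on an unproved local--global statement of Chebotarev/Schinzel type. In the exceptional case it simply fails, and your patch is wrong. Take $\cP=\{2\}$ and $h=-1$, so $(S_\cP+h)\cap\N=\{2^n-1:n\ge1\}$. Every prime $q\notin\cP$, indeed every odd modulus $c$, divides $2^{k\,\mathrm{ord}_c(2)}-1$ for all $k\ge1$. So every rotation $\ee{b/c}$ with $c$ odd equals $1$ at infinitely many times in every tail. These bad times are infinite in number, so they cannot go into a finite exceptional set for Lemma~\ref{lem:finite-times}. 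Your assertion that only finitely many $r$ ``with any given bad structure'' occur is therefore false whenever $h=-t$ with $t\in S_\cP$. In that case, no choice of primes outside $\cP$, nor any refinement by residues modulo them, can work. Your ``up to units'' case also sweeps in $h=t>0$ without any argument; for example, $\cP=\{2\}$, $h=1$ is actually separated by $q=7$.

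\textbf{The missing idea.} The separating rotation should come from the primes \emph{in} $\cP$, using the divisibility you already have. For each $E_i$, choose $c_i>|h|$ and $L_i>|h|$ with $c_i\mid r$ for all $r\in E_i$, $r\ge L_i$. This is possible because the tail gcd tends to infinity, e.g.\ $c_i=p_i^K$ for large $K$. Then apply Lemma~\ref{lem:measures} to $E_i\cap[L_i,\infty)$. For every $r$ in the resulting pieces, $r+h\equiv h\pmod{c_i}$ with $0<|h|<c_i$. Hence $\zeta_i=\ee{1/c_i}$ gives $|\zeta_i^{\,r+h}-1|=|\ee{h/c_i}-1|>0$ uniformly, and $|\zeta_i-1|>0$ covers the added time $1$. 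In the example above this is just $2^n-1\equiv-1\pmod{2^K}$ for $n\ge K$. The finitely many times coming from $F$ and from $E_i\cap[1,L_i)$ then go to Lemma~\ref{lem:finite-times}. No further number-theoretic input is needed.
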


\begin{proof}
	For $h=0$, the conclusion follows from
	Theorem~\ref{thm:main}. Suppose $h\ne0$.
	
	By Lemma~\ref{lem:valuation-cover}, write
$S_{\cP}=F\cup E_1\cup\cdots\cup E_s,$
	where $F$ is finite and each $E_i$ is
	infinite and satisfies
	\[
	\lim_{N\to\infty}
	\gcd\{r\in E_i:r\ge N\}=\infty.
	\]
	
	For each $1\le i\le s$, choose integers
	$c_i>|h|$ and $L_i>|h|$ such that
	\[
	c_i\mid r
	\qquad(r\in E_i,\ r\ge L_i).
	\]
	Put
$E_i'=E_i\cap[L_i,\infty).$
	Each $E_i'$ is infinite and still satisfies
	the same condition on greatest common
	divisors.
	Apply Lemma~\ref{lem:measures} to $E_i'$
	with $\eta=1$. We obtain
$E_i'=C_{i,0}\sqcup C_{i,1}$
	and unit complex numbers
	$\lambda_{i,\epsilon,k}\ne1$ such that
	\[
	\lambda_{i,\epsilon,k}\to1,
	\qquad
	\sup_{r\in C_{i,\epsilon}}
	|\lambda_{i,\epsilon,k}^r-1|
	\to0
	\]
	as $k\to\infty$, for $\epsilon=0,1$.
	
	Fix $i$ and $\epsilon$, and define
$Q_{i,\epsilon}
	=(C_{i,\epsilon}+h)\cup\{1\}.$
	Since $L_i>|h|$, all elements of
	$C_{i,\epsilon}+h$ are positive.
	Moreover, $Q_{i,\epsilon}$ is infinite
	and contains $1$.
	For every $r\in C_{i,\epsilon}$,
	\[
	\begin{aligned}
		|\lambda_{i,\epsilon,k}^{r+h}-1|
		&\le
		|\lambda_{i,\epsilon,k}^r-1|
		+|\lambda_{i,\epsilon,k}^h-1|\\
		&\le
		|\lambda_{i,\epsilon,k}^r-1|
		+|h|\,|\lambda_{i,\epsilon,k}-1|.
	\end{aligned}
	\]
	Consequently,
	\[
	\begin{aligned}
		\sup_{n\in Q_{i,\epsilon}}
		|\lambda_{i,\epsilon,k}^n-1|
	\le
		\sup_{r\in C_{i,\epsilon}}
		|\lambda_{i,\epsilon,k}^r-1|
		+(|h|+1)|\lambda_{i,\epsilon,k}-1|
		\longrightarrow0.
	\end{aligned}
	\]
	
	Next, let $\zeta_i=\ee{1/c_i}$.
	Since $c_i\mid r$ for $r\in C_{i,\epsilon}$,
$\zeta_i^{r+h}=\ee{h/c_i}.$
	As $0<|h|<c_i$, we have
	\[
	\begin{aligned}
		\inf_{n\in Q_{i,\epsilon}}
		|\zeta_i^n-1|
	\ge
		\min\left\{
		|\ee{h/c_i}-1|,
		|\ee{1/c_i}-1|
		\right\}>0.
	\end{aligned}
	\]
	Thus $Q_{i,\epsilon}$ satisfies the
	assumptions of Lemma~\ref{lem:grivaux}.
	We obtain an invertible weakly mixing
	system and a positive-measure Borel set
	with no returns at times in
	$Q_{i,\epsilon}$.
	
	The remaining positive times belong to
	the finite set
	\[
	H=
	\left(
	\left(
	F\cup\bigcup_{i=1}^s(E_i\setminus E_i')
	\right)+h
	\right)\cap\N.
	\]
	If $H$ is nonempty, apply
	Lemma~\ref{lem:finite-times} to $H$.
	Since
	\[
	(S_{\cP}+h)\cap\N
	\subset
	H\cup\bigcup_{i=1}^s
	\bigcup_{\epsilon=0}^1 Q_{i,\epsilon},
	\]
	Lemma~\ref{lem:product} gives the
	required system and Borel set.
\end{proof}

\begin{remark}\label{rem:general-translates}
	The proof of Corollary~\ref{cor:translates}
	also gives a more general result.
	Suppose
	\[
	R=F\cup E_1\cup\cdots\cup E_s\subset\N,
	\]
	where $F$ is finite, each $E_i$ is infinite,
	and
	\[
	\lim_{N\to\infty}
	\gcd\{r\in E_i:r\ge N\}=\infty
	\qquad(1\le i\le s).
	\]
	Then, for every $h\in\Z\setminus\{0\}$,
	there exist an invertible weakly mixing system
	$(X,\cB,\mu,T)$ and a Borel set $A$ such that
	\[
	\mu(A)>0,\qquad
	A\cap T^{-r}A=\varnothing
	\quad(r\in(R+h)\cap\N).
	\]
	
	Indeed, for each $i$, we can choose
	$c_i>|h|$ and $L_i>|h|$ such that
	\[
	c_i\mid r
	\qquad(r\in E_i,\ r\ge L_i).
	\]
	Hence
	\[
	|\ee{(r+h)/c_i}-1|
	=|\ee{h/c_i}-1|>0.
	\]
The argument used to prove Corollary~\ref{cor:translates}
	also applies to these sets.
We use an additional Gaussian shift to rule out returns
at the remaining finitely many times, and then take
the product of all the systems.
\end{remark}

\subsection{A compact symbolic realization}\label{sec:symbolic}
A homeomorphism $S$ of a compact metric space $Y$ is topologically
transitive if for any nonempty open sets $U,V\subset Y$, some
$n\geq0$ satisfies $U\cap S^{-n}V\ne\varnothing$. It is
topologically weakly mixing if $S\times S$ is topologically
transitive on $Y\times Y$. A set is clopen if it is both open
and closed.
Theorem~\ref{thm:main} also gives the following
result for symbolic systems.

\begin{proposition}\label{prop:symbolic}
Let $\cP=\{p_1,\ldots,p_d\}$ be a finite nonempty set of primes, and put
\[
 S_{\cP}=\{p_1^{n_1}\cdots p_d^{n_d}:n_1,\ldots,n_d\in\Nzero\}.
\]
There exist a compact set $Y\subset\{0,1\}^{\Z}$ with $S(Y)=Y$
for the shift $(Sy)_n=y_{n+1}$, and a nonempty clopen set $U\subset Y$,
such that
\[
 U\cap S^{-r}U=\varnothing\qquad(r\in S_{\cP}).
\]
The shift on $Y$ is topologically weakly mixing and admits a
full-support weakly mixing invariant Borel probability measure.
\end{proposition}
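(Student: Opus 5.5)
The plan is to code the set $A$ from Theorem~\ref{thm:main} by its return pattern. Let $(X,\cB,\mu,T)$ and $A$ be as in Theorem~\ref{thm:main}, and define
\[
 \pi:X\to\{0,1\}^{\Z},\qquad \pi(x)_n=1_A(T^nx)\quad(n\in\Z).
\]
Each coordinate of $\pi$ is Borel, so $\pi$ is Borel. Moreover $\pi\circ T=S\circ\pi$, so $\nu=\pi_*\mu$ is an $S$-invariant Borel probability measure on $\{0,1\}^{\Z}$. We then set
\[
 Y=\supp\nu,\qquad U=\{y\in Y:y_0=1\}.
\]
The set $Y$ is compact, being closed in a compact space. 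Since $S$ is a homeomorphism preserving $\nu$, it maps $\supp\nu$ onto $\supp S_*\nu=\supp\nu$, so $S(Y)=Y$. The set $U$ is the trace on $Y$ of a cylinder, hence clopen in $Y$. It is nonempty because
\[
 \nu(\{y_0=1\})=\mu(A)>0
\]
and $\nu$ is concentrated on $Y$.

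For non-return, fix $r\in S_{\cP}$ and consider the cylinder $W_r=\{y:y_0=1,\ y_r=1\}$. It is open and satisfies
\[
 \nu(W_r)=\mu(A\cap T^{-r}A)=0.
\]
An open set of measure zero is disjoint from the support, so $W_r\cap Y=\varnothing$. This is exactly $U\cap S^{-r}U=\varnothing$. The point of this step is that we do not need every point of $Y$ to be an image $\pi(x)$: the null cylinder argument handles the closure automatically.

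For the dynamical properties, $(Y,\nu,S)$ is a factor of the weakly mixing system $(X,\mu,T)$, so it is weakly mixing. This follows by applying \eqref{eq:wm} to functions of the form $f\circ\pi$. By construction, $\nu$ has full support on $Y$. For topological weak mixing, I would use that $\nu\times\nu$ is ergodic for $S\times S$ and has full support on $Y\times Y$. Given nonempty open sets $U',V'\subset Y\times Y$, both have positive $\nu\times\nu$ measure. The set $\bigcup_{n\ge1}(S\times S)^{-n}V'$ satisfies $(S\times S)^{-1}B\subset B$, and by measure preservation it is invariant mod null sets. By ergodicity it has full measure, so it meets $U'$ in positive measure, giving some $n\ge1$ with $U'\cap(S\times S)^{-n}V'\ne\varnothing$.

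I expect the only delicate points to be these two: obtaining a nonnegative transfer time from measure-theoretic ergodicity, which the forward-invariance argument above handles, and checking that the support (rather than the image $\pi(X)$) still has no returns, which the null open cylinder argument handles. Everything else is bookkeeping.
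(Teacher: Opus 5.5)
Your proof is correct and follows the paper's argument: the same coding $x\mapsto(1_A(T^nx))_{n\in\Z}$, the choice $Y=\supp\nu$ with $U$ the trace of the cylinder $\{y_0=1\}$, weak mixing by passing \eqref{eq:wm} to the factor, and the observation that an open cylinder of measure zero misses the support. The only difference is in the topological weak mixing step: you use ergodicity of $\nu\times\nu$ under $S\times S$, full support of $\nu\times\nu$, and a forward-invariant union, whereas the paper applies \eqref{eq:wm} to two pairs of open sets to find a common $n$ and then uses that open rectangles form a base; both arguments are valid.
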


\begin{proof}
Take $(X,\cB,\mu,T)$ and $A$ from Theorem~\ref{thm:main}. Define
\[
 \Phi(x)_n=1_A(T^nx),\qquad m=\Phi_*\mu,\qquad Y=\supp m.
\]
Since $\Phi\circ T=S\circ\Phi$ and $T_*\mu=\mu$,
we have $S_*m=m$ and $S(Y)=Y$.
Moreover, applying \eqref{eq:wm} to
$f\circ\Phi$ and $g\circ\Phi$ shows that
$(Y,m,S)$ is weakly mixing.
For the clopen set $U=\{y\in Y:y_0=1\}$,
\[
 \begin{aligned}
 m(U)&=\mu(A)>0,\\
 m(U\cap S^{-r}U)&=\mu(A\cap T^{-r}A)=0
                   &&(r\in S_{\cP}).
 \end{aligned}
\]
Each $U\cap S^{-r}U$ is open in $Y$, so full support makes it empty.
To check topological weak mixing, take nonempty open sets
$U_1,U_2,V_1,V_2\subset Y$. Since $m$ has full support, we have
$c_i:=m(U_i)m(V_i)>0$.
By \eqref{eq:wm},
\[
\begin{aligned}
	&\frac1N\#\{1\le n\le N:
	m(U_i\cap S^{-n}V_i)=0\}\\
	&\quad\le
	\frac1{c_iN}\sum_{n=1}^N
	\left|m(U_i\cap S^{-n}V_i)-c_i\right|
	\longrightarrow0.
\end{aligned}
\] There is therefore
a common $n$ for which both intersections have positive measure.
Since open rectangles form a base for $Y\times Y$, this proves
topological transitivity of $S\times S$.
\end{proof}

The same construction applies to
Theorem~\ref{thm:general},
Corollaries~\ref{cor:infinite}
and~\ref{cor:translates}, and
Remark~\ref{rem:general-translates}.

	\section*{Acknowledgments}
The author would like to thank his advisor, Professor Siming Tu, for his continued guidance, support, and encouragement.
The author is also grateful to Professor Bryna Kra for sharing her earlier
unpublished notes and for drawing attention to
related results in the literature.

\smallskip

\emph{Statement on the Use of AI Tools in This Work:}
While preparing this work, the author used ChatGPT 6 for assistance with mathematical discussions and editorial revisions. All mathematical results, proofs, and interpretations were verified and approved by the
author, who takes full responsibility for the content of this work.

\end{document}